\newtheorem{theorem}{Theorem}
\newtheorem{proposition}{Proposition}
\newtheorem{definition}{Definition}
\newtheorem{lemma}{Lemma}
\def\mypar#1{\smallskip\noindent\textbf{#1}}
\newcommand{\RR}{\mathbb R}
\newcommand{\ZZ}{\mathbb Z}
\def\b1{{\bf 1}}
\def\be{{\bf e}}
\def\ba{{\bf a}}
\def\bb{{\bf b}}
\def\bx{{\bf x}}
\def\by{{\bf y}}
\def\bv{{\bf v}}
\def\bz{{\bf z}}
\def\RR{{\mathbb R}}
\def\prob#1{\textup{\textsf{#1}}\xspace}
\def\HyperLabel{\prob{Hypergraph Labeling}}
\def\sep{separating set}
\begin{document}

\title{\Large Sperner's Colorings and \\ Optimal Partitioning of the Simplex\footnote{To appear in {\em ``A Journey through Discrete Mathematics. A Tribute to Ji\v{r}\'{i} Matou\v{s}ek"}, edited by Martin Loebl, Jaroslav Ne\v{s}et\v{r}il and Robin Thomas, due to be published by Springer. The first two results of this paper have also appeared in \cite{MV15}.}}
\author{Maryam Mirzakhani\thanks{Dept. of Mathematics, Stanford University, Stanford, CA; {\tt mmirzakh@stanford.edu}}
 \and Jan Vondr\'ak\thanks{Dept. of Mathematics, Stanford University, Stanford, CA; {\tt jvondrak@stanford.edu}}}
\date{}

\maketitle



\begin{abstract} \small\baselineskip=9pt 
We discuss coloring and partitioning questions related to Sperner's Lemma, originally motivated by an application in hardness of approximation. Informally, we call a partitioning of the $(k-1)$-dimensional simplex into $k$ parts, or a labeling of a lattice inside the simplex by $k$ colors, ``Sperner-admissible" if color $i$ avoids the face opposite to vertex $i$. The questions we study are of the following flavor: What is the Sperner-admissible labeling/partitioning that makes the total area of the boundary between different colors/parts as small as possible?

First, for a natural arrangement of ``cells" in the simplex, we prove an optimal lower bound on the number of cells that must be non-monochromatic in any Sperner-admissible labeling. This lower bound is matched by a simple labeling where each vertex receives the minimum admissible color.

Second, we show for this arrangement that in contrast to Sperner's Lemma, there is a Sperner-admissible labeling such that every cell contains at most $4$ colors. 


Finally, we prove a geometric variant of the first result: For any Sperner-admissible partition of the regular simplex, the total surface area of the boundary shared by at least two different parts is minimized by the Voronoi partition $(A^*_1,\ldots,A^*_k)$ where $A^*_i$ contains all the points whose closest vertex is $i$. We also discuss possible extensions of this result to general polytopes and some open questions.
\end{abstract}

\section{Introduction}

Sperner's Lemma is a gem in combinatorics which was originally discovered by Emmanuel Sperner \cite{Sperner28} as a tool to derive a simple proof of Brouwer's Fixed Point Theorem. Since then, Sperner's Lemma has seen numerous applications, notably in the proof of existence of mixed Nash equilibria \cite{Nash51}, in fair division \cite{Su99}, and recently it played an important role in the study of computational complexity of finding a Nash equilibrium \cite{DaskalakisGP09,ChenDT09}. At a high level, Sperner's Lemma states that for any coloring of a simplicial subdivision of a simplex satisfying certain boundary conditions, there must be a ``rainbow cell" that receives all possible colors. We review Sperner's Lemma in Section~\ref{sec:Sperner}.

The starting point of this work was a question that arises in the study of approximation algorithms for a certain hypergraph labeling problem \cite{EV14}. The question posed by \cite{EV14}, while in some ways reminiscent of Sperner's Lemma, is different in the following sense: Instead of asking whether there exists a rainbow cell for any admissible coloring, the question is what is the minimum possible number of cells that must be {\em non-monochromatic}. (Also, the question arises for a particular regular lattice inside the simplex rather than an arbitrary subdivision.) In this paper, we resolve this question and investigate some related problems.

Before we state our results, let us note the following connection. As the granularity of the subdivision tends to zero, Sperner's Lemma becomes a statement about certain geometric partitions of the simplex: for any Sperner-admissible partition, where part $i$ avoids the face opposite to vertex $i$, there must be a point where all parts meet. This result is known as the Knaster-Kuratowski-Mazurkiewicz Lemma \cite{KKM29}.
In contrast, the questions we are studying are concerned with the measure of the boundary where at least two different parts meet: This can be viewed as a multi-colored isoperimetric inequality, where we try to partition the simplex in a certain way, so that the surface area of the union of all pairwise boundaries (what we call a {\em \sep}) is minimized. The way we measure the \sep~also affects the problem; the discrete version of the question that is of primary interest to us is mandated by the application in \cite{EV14}. In the geometric setting, a natural notion of surface area is the Minkowski content of the \sep~(which coincides with other notions of volume for well-behaved sets). We give an optimal answer to this question for a regular simplex and discuss other related questions.

To state our results formally, we need some notation that we introduce in Section~\ref{sec:prelims}. We postpone our contributions to Sections~\ref{sec:lattice-coloring}---\ref{sec:simplex-partition}, after a discussion of Sperner's Lemma in Section~\ref{sec:Sperner}.

\section{Preliminaries}
\label{sec:prelims}

We denote vectors in boldface, such as $\bv \in \RR^k$. The coordinates of $\bv$ are written in italics,
such as $\bv = (v_1,\ldots,v_k)$. By $\be_i$, we denote the canonical basis vectors $(0,\ldots,1, \ldots, 0)$.
By $\mbox{conv}(\bv_1,\ldots,\bv_k)$, we denote the convex hull of the respective vectors.

\subsection{Simplicial subdivisions of the simplex}
\label{sec:sperner-simplex}

Consider the $(k-1)$-dimensional simplex defined by 
	$$\Delta_{k} = \mbox{conv}(\be_1,\ldots,\be_k)
	 = \left\{ \bx = (x_1, x_2,\ldots, x_k) \in \RR^k:
	\bx \geq 0,	\sum_{i=1}^{k} x_i = 1 \right\}.$$

\mypar{Simplicial subdivision.}
A simplicial subdivision of $\Delta_{k}$ is a collection of simplices (``cells") $\Sigma$ such that
\begin{compactitem}
\item The union of the cells in $\Sigma$ is the simplex $\Delta_{k}$.
\item For any two cells $\sigma_1, \sigma_2 \in \Sigma$, their intersection is either empty
 or a full face of a certain dimension shared by $\sigma_1, \sigma_2$.
\end{compactitem}

\medskip

\mypar{The Simplex-Lattice Hypergraph.}
Next, we describe a specific configuration of cells in a simplex; this configuration is actually not a full subdivision
since its cells do not cover the full volume of the simplex. It can be completed to a subdivision if desired.\footnote{This
specific configuration arises in \cite{EV14} as an integrality gap example for a certain hypergraph labeling problem;
see also \cite{MV15} for more details.}

Let $q \geq 1$ be an integer and define
	$$\Delta_{k,q} = \left\{ \bx = (x_1, x_2,\ldots, x_k) \in \RR^k:
	\bx \geq 0,	\sum_{i=1}^{k} x_i = q \right\}.$$

We consider a vertex set of all the points in $\Delta_{k,q}$ with integer
coordinates:
	$$ V_{k,q} = \left\{ \ba = (a_1, a_2, \ldots, a_k) \in \ZZ^k:
	\ba \geq 0, \sum_{i=1}^{k} a_i = q \right\}.$$

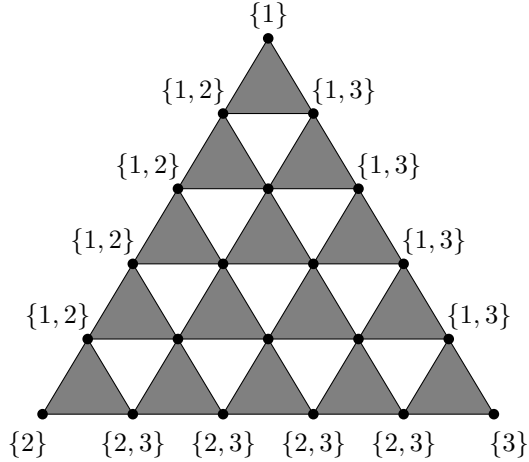
\begin{figure}[h]
\caption{\small The Simplex Lattice Hypergraph for $k=3$ and $q=5$, with hyperedges shaded in gray.
The gray triangles together with the white triangles form a simplicial subdivision.
The lists of admissible colors are given on the boundary; for internal vertices the lists are all $\{1,2,3\}$.
 }

\label{fig:Sperner-setup}

\begin{tikzpicture}


\draw (-1,0) node {  };


\filldraw [fill=gray] (2,0) -- (3.2,0) -- (2.6,1) -- cycle;
\filldraw [fill=gray] (3.2,0) -- (4.4,0) -- (3.8,1) -- cycle;
\filldraw [fill=gray] (4.4,0) -- (5.6,0) -- (5,1) -- cycle;
\filldraw [fill=gray] (5.6,0) -- (6.8,0) -- (6.2,1) -- cycle;
\filldraw [fill=gray] (6.8,0) -- (8,0) -- (7.4,1) -- cycle;

\filldraw [fill=gray] (2.6,1) -- (3.8,1) -- (3.2,2) -- cycle;
\filldraw [fill=gray] (3.8,1) -- (5,1) -- (4.4,2) -- cycle;
\filldraw [fill=gray] (5,1) -- (6.2,1) -- (5.6,2) -- cycle;
\filldraw [fill=gray] (6.2,1) -- (7.4,1) -- (6.8,2) -- cycle;

\filldraw [fill=gray] (3.2,2) -- (4.4,2) -- (3.8,3) -- cycle;
\filldraw [fill=gray] (4.4,2) -- (5.6,2) -- (5,3) -- cycle;
\filldraw [fill=gray] (5.6,2) -- (6.8,2) -- (6.2,3) -- cycle;

\filldraw [fill=gray] (3.8,3) -- (5,3) -- (4.4,4) -- cycle;
\filldraw [fill=gray] (5,3) -- (6.2,3) -- (5.6,4) -- cycle;

\filldraw [fill=gray] (4.4,4) -- (5.6,4) -- (5,5) -- cycle;

\fill (2,0) circle (2pt);
\fill (3.2,0) circle (2pt);
\fill (4.4,0) circle (2pt);
\fill (5.6,0) circle (2pt);
\fill (6.8,0) circle (2pt);
\fill (8,0) circle (2pt);

\fill (2.6,1) circle (2pt);
\fill (3.8,1) circle (2pt);
\fill (5,1) circle (2pt);
\fill (6.2,1) circle (2pt);
\fill (7.4,1) circle (2pt);

\fill (3.2,2) circle (2pt);
\fill (4.4,2) circle (2pt);
\fill (5.6,2) circle (2pt);
\fill (6.8,2) circle (2pt);

\fill (3.8,3) circle (2pt);
\fill (5,3) circle (2pt);
\fill (6.2,3) circle (2pt);

\fill (4.4,4) circle (2pt);
\fill (5.6,4) circle (2pt);

\fill (5,5) circle (2pt);

\draw (5,5.3) node {$\{1\}$};
\draw (1.8,-0.4) node {$\{2\}$};
\draw (8.2,-0.4) node {$\{3\}$};

\draw (4.0,4.3) node {$\{1,2\}$};
\draw (3.4,3.3) node {$\{1,2\}$};
\draw (2.8,2.3) node {$\{1,2\}$};
\draw (2.2,1.3) node {$\{1,2\}$};

\draw (6.0,4.3) node {$\{1,3\}$};
\draw (6.6,3.3) node {$\{1,3\}$};
\draw (7.2,2.3) node {$\{1,3\}$};
\draw (7.8,1.3) node {$\{1,3\}$};

\draw (3.2,-0.4) node {$\{2,3\}$};
\draw (4.4,-0.4) node {$\{2,3\}$};
\draw (5.6,-0.4) node {$\{2,3\}$};
\draw (6.8,-0.4) node {$\{2,3\}$};



\end{tikzpicture}

\end{figure}

The {\em Simplex-Lattice Hypergraph} is a $k$-uniform hypergraph $H_{k,q} = (V_{k,q},E_{k,q})$
whose hyperedges (which we also call {\em cells} due to their
geometric interpretation) are indexed by $\bb \in \ZZ_+^k$ such
that $\sum_{i=1}^{k} b_i = q-1$: we have
	$$E_{k,q} = \left\{ e(\bb): \bb \in \ZZ^k, \bb \geq 0,
		 \sum_{i=1}^{k} b_i = q-1 \right\}$$
where
	$ e(\bb) = \{ \bb + \be_1, \bb + \be_2, \ldots, \bb + \be_k \}
		=	\{ (b_1+1, b_2, \ldots, b_k), (b_1, b_2+1,\ldots,b_k), \ldots, (b_1,b_2,\ldots, b_k+1) \}.$
For each vertex $\ba \in V_{k,q}$, we have a list of admissible colors $L(\ba)$, which
is
	$$ L(\ba) = \{ i \in [k]: a_i > 0 \}.$$

\section{Sperner's Lemma}
\label{sec:Sperner}

First, let us recall the statement of Sperner's Lemma \cite{Sperner28}. 
We consider labelings $\ell:V_{k,q} \rightarrow [k]$. We call a labeling $\ell$ Sperner-admissible
if $\ell(\ba) \in L(\ba)$ for each $\ba \in V$; i.e.~, if $\ell(\ba) = j$ then $a_j > 0$.

\begin{lemma}[Sperner's Lemma]
For every Sperner-admissible labeling of the vertices of a simplicial subdivision of $\Delta_k$,
there is a cell whose vertices receive all $k$ colors.
\end{lemma}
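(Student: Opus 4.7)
The plan is to follow the classical parity argument, by induction on the dimension $k-1$ (equivalently, on the number of colors $k$). The base case $k=2$ is immediate: a Sperner-admissible labeling of a subdivision of the segment $\mbox{conv}(\be_1,\be_2)$ must assign color $1$ to $\be_1$ and color $2$ to $\be_2$, so walking along the segment one must cross an edge whose endpoints receive both colors.

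For the inductive step, fix $k\geq 3$ and call a $(k-2)$-dimensional face $\tau$ of some cell $\sigma\in\Sigma$ a \emph{door} if the vertices of $\tau$ realize exactly the color set $\{1,2,\ldots,k-1\}$. I would count pairs $(\sigma,\tau)$ where $\sigma\in\Sigma$ is a cell and $\tau$ is one of its doors. On the one hand, classify each cell $\sigma$ by the multiset of colors appearing on its $k$ vertices: if the color set is all of $[k]$, then $\sigma$ has exactly one door (the unique facet avoiding the vertex colored $k$); if the color set is exactly $\{1,\ldots,k-1\}$ with some color repeated, then $\sigma$ has exactly two doors; in all remaining cases $\sigma$ has no door. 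Therefore, modulo $2$, the number of pairs $(\sigma,\tau)$ equals the number of rainbow cells.

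On the other hand, I would count the same pairs by first choosing the door $\tau$. An \emph{interior} $(k-2)$-face is shared by exactly two cells of $\Sigma$, contributing $0$ modulo $2$. A \emph{boundary} $(k-2)$-face belongs to exactly one cell, so the total count modulo $2$ equals the number of doors lying on $\partial\Delta_k$. Sperner-admissibility rules out color $i$ on the face opposite to $\be_i$, so any boundary door must lie on the face $F_k=\{\bx\in\Delta_k:x_k=0\}$. Restricted to $F_k$, the simplicial subdivision induces a simplicial subdivision of a $(k-2)$-dimensional simplex, and the labeling (with values in $\{1,\ldots,k-1\}$) is Sperner-admissible for this smaller simplex. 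By induction, the number of rainbow $(k-1)$-colored cells in $F_k$ — which are precisely the boundary doors — is odd. Combining the two counts yields that the number of rainbow cells in $\Sigma$ is odd, hence at least one exists.

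The only delicate point is verifying cleanly that the induced structure on $F_k$ is itself a simplicial subdivision with a Sperner-admissible labeling in $k-1$ colors, and that the door-counting bookkeeping correctly handles the case of repeated colors; neither is really hard, but they have to be stated carefully so the parity argument goes through. Everything else is routine double counting.
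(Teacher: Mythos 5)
Your argument is the classical door-counting (parity) proof of Sperner's Lemma, and it is essentially correct; note that the paper itself does not prove this lemma at all --- it only recalls the statement and cites Sperner's original 1928 paper --- so you are supplying a proof where the paper gives none, and the standard parity argument is exactly the right choice. The case analysis of doors per cell (one door for a rainbow cell, two doors when the color set is exactly $\{1,\ldots,k-1\}$ with a repeat, zero otherwise), the observation that interior $(k-2)$-faces are shared by exactly two cells, and the Sperner-admissibility argument forcing boundary doors onto the facet $\{x_k=0\}$ are all correct. One small point of bookkeeping you should fix when writing this up: the statement you actually carry through the induction is the \emph{parity} version (the number of rainbow cells is odd), since the inductive step needs the number of boundary doors on $F_k$ to be odd, not merely nonzero. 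Your base case as written only asserts existence of a bichromatic edge; you should state it as ``the number of bichromatic edges is odd,'' which follows immediately because the two endpoints of the segment carry different colors, so the label changes an odd number of times along the path. With the induction hypothesis phrased that way throughout (and the routine verification, which you already flag, that the cells of $\Sigma$ lying in $F_k$ form a simplicial subdivision of a $(k-2)$-dimensional simplex with an induced Sperner-admissible labeling in the colors $\{1,\ldots,k-1\}$), the double count closes correctly and yields the lemma.
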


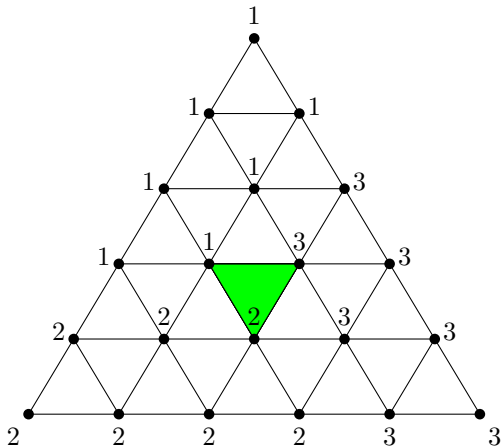
\begin{figure}[h]
\caption{\small A Sperner-admissible labeling for $k=3$ and $q=5$.
At least one cell in the triangulation (not necessarily in $E_{k,q}$) must be $k$-colored (rainbow).
}

\label{fig:Sperner}

\begin{tikzpicture}

\draw (-1,0) node {   };


\draw (2,0) -- (3.2,0) -- (2.6,1) -- cycle;
\draw (3.2,0) -- (4.4,0) -- (3.8,1) -- cycle;
\draw (4.4,0) -- (5.6,0) -- (5,1) -- cycle;
\draw (5.6,0) -- (6.8,0) -- (6.2,1) -- cycle;
\draw (6.8,0) -- (8,0) -- (7.4,1) -- cycle;

\draw (2.6,1) -- (3.8,1) -- (3.2,2) -- cycle;
\draw (3.8,1) -- (5,1) -- (4.4,2) -- cycle;
\draw (5,1) -- (6.2,1) -- (5.6,2) -- cycle;
\draw (6.2,1) -- (7.4,1) -- (6.8,2) -- cycle;

\filldraw[fill=red] (4.4,2) -- (5.6,2) -- (5,1) -- cycle;
\filldraw[fill=blue,path fading=west] (4.4,2) -- (5.6,2) -- (5,1) -- cycle;
\filldraw[fill=green,path fading=north] (4.4,2) -- (5.6,2) -- (5,1) -- cycle;

\draw (3.2,2) -- (4.4,2) -- (3.8,3) -- cycle;
\draw (4.4,2) -- (5.6,2) -- (5,3) -- cycle;
\draw (5.6,2) -- (6.8,2) -- (6.2,3) -- cycle;
\draw (3.8,3) -- (5,3) -- (4.4,4) -- cycle;
\draw (5,3) -- (6.2,3) -- (5.6,4) -- cycle;
\draw (4.4,4) -- (5.6,4) -- (5,5) -- cycle;

\fill (2,0) circle (2pt);
\fill (3.2,0) circle (2pt);
\fill (4.4,0) circle (2pt);
\fill (5.6,0) circle (2pt);
\fill (6.8,0) circle (2pt);
\fill (8,0) circle (2pt);

\fill (2.6,1) circle (2pt);
\fill (3.8,1) circle (2pt);
\fill (5,1) circle (2pt);
\fill (6.2,1) circle (2pt);
\fill (7.4,1) circle (2pt);

\fill (3.2,2) circle (2pt);
\fill (4.4,2) circle (2pt);
\fill (5.6,2) circle (2pt);
\fill (6.8,2) circle (2pt);

\fill (3.8,3) circle (2pt);
\fill (5,3) circle (2pt);
\fill (6.2,3) circle (2pt);

\fill (4.4,4) circle (2pt);
\fill (5.6,4) circle (2pt);

\fill (5,5) circle (2pt);

\draw (5,5.3) node {$1$};
\draw (1.8,-0.3) node {$2$};
\draw (8.2,-0.3) node {$3$};

\draw (4.2,4.1) node {$1$};
\draw (3.6,3.1) node {$1$};
\draw (3,2.1) node {$1$};
\draw (2.4,1.1) node {$2$};

\draw (5.8,4.1) node {$1$};
\draw (6.4,3.1) node {$3$};
\draw (7,2.1) node {$3$};
\draw (7.6,1.1) node {$3$};

\draw (3.2,-0.3) node {$2$};
\draw (4.4,-0.3) node {$2$};
\draw (5.6,-0.3) node {$2$};
\draw (6.8,-0.3) node {$3$};

\draw (5,3.3) node {$1$};
\draw (4.4,2.3) node {$1$};
\draw (5.6,2.3) node {$3$};
\draw (3.8,1.3) node {$2$};
\draw (5,1.3) node {$2$};
\draw (6.2,1.3) node {$3$};

\end{tikzpicture}

\end{figure}

We remark that this does not say anything about the Simplex-Lattice Hypergraph:
Even if the subdivision uses the point set $V_{k,q}$, the rainbow cell given by Sperner's Lemma
might not be a member of $E_{k,q}$ since $E_{k,q}$ consists only of scaled copies
of $\Delta_{k,q}$ without rotation; it is not a full subdivision of the simplex.
(See Figure~\ref{fig:Sperner}.)

\section{The Simplex-Lattice Coloring Lemma}
\label{sec:lattice-coloring}

Instead of rainbow cells, the statement proposed (and proved for $k=3$)
 in \cite{EV14} involves non-monochromatic cells.

\begin{proposition}[Simplex-Lattice Coloring Lemma]
\label{prop:EV}
	For any Sperner-admissible labeling $\ell:V_{k,q} \rightarrow [k]$,
	there are at least ${q+k-3 \choose k-2}$ hyperedges $e \in E_{k,q}$
	that are non-monochromatic under $\ell$.
\end{proposition}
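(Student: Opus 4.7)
The plan is to convert the desired lower bound on non-monochromatic hyperedges into an upper bound on monochromatic ones. Since $|E_{k,q}| = \binom{q+k-2}{k-1}$ and Pascal's identity gives $\binom{q+k-2}{k-1} = \binom{q+k-3}{k-1} + \binom{q+k-3}{k-2}$, it suffices to prove that for any Sperner-admissible $\ell$, at most $\binom{q+k-3}{k-1}$ hyperedges are monochromatic.

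For each color $j \in [k]$, let $M_j$ denote the set of $\bb \in \ZZ_{\geq 0}^k$ with $\sum_i b_i = q-1$ such that $e(\bb)$ is monochromatic of color $j$ under $\ell$. If $\bb \in M_j$, then for any $i \neq j$ the vertex $\bb+\be_i$ has its $j$-th coordinate equal to $b_j$, and Sperner-admissibility of $\ell(\bb+\be_i)=j$ forces $b_j \geq 1$. This lets me define the shift map
\[
\phi_j \colon M_j \to \Big\{\bc \in \ZZ_{\geq 0}^k : \sum_{i=1}^{k} c_i = q-2\Big\}, \qquad \phi_j(\bb) = \bb - \be_j,
\]
which is clearly injective. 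The target set has size $\binom{q+k-3}{k-1}$.

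The crux is to show that the images $\phi_j(M_j)$ are pairwise disjoint across distinct $j$. Suppose $\phi_j(\bb) = \phi_{j'}(\bb')$ for some $\bb \in M_j$, $\bb' \in M_{j'}$ with $j \neq j'$. Then $\bb + \be_j = \bb' + \be_{j'}$, so this common vertex $\bv$ lies in both hyperedges $e(\bb)$ and $e(\bb')$. Since $e(\bb)$ is monochromatic of color $j$ we must have $\ell(\bv) = j$, while $e(\bb')$ being monochromatic of color $j'$ forces $\ell(\bv) = j'$, a contradiction. Therefore $\sum_{j=1}^{k} |M_j| \leq \binom{q+k-3}{k-1}$, establishing the claimed bound on non-monochromatic hyperedges.

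The main (and essentially only) clever step is the ``centering'' shift $\bb \mapsto \bb - \be_j$, which is motivated by the observation that the vertex $\bb+\be_j$ is the ``$j$-biased'' corner of $e(\bb)$ and is shared with neighboring hyperedges in a controlled way. I do not anticipate any serious obstacle, since once the right shift is identified, the argument reduces to a single-vertex disjointness check. Tightness is witnessed by the minimum-admissible-color labeling $\ell(\ba) = \min\{i : a_i > 0\}$: under this labeling, $e(\bb)$ is monochromatic exactly when $b_1 \geq 1$ (and then of color $1$), giving exactly $\binom{q+k-3}{k-1}$ monochromatic cells and hence $\binom{q+k-3}{k-2}$ non-monochromatic ones, saturating the inequality.
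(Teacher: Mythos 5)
Your proposal is correct and follows essentially the same route as the paper: identify monochromatic cells of color $j$ with points $\bb$, shift by $\phi_j(\bb)=\bb-\be_j$ into the lattice of nonnegative integer vectors summing to $q-2$ (well-defined by Sperner-admissibility), and use injectivity plus pairwise disjointness of the images to bound the number of monochromatic hyperedges by $\binom{q+k-3}{k-1}$, which with $|E_{k,q}|=\binom{q+k-2}{k-1}$ and Pascal's identity gives the claim. One trivial index slip: from $\phi_j(\bb)=\phi_{j'}(\bb')$ the shared vertex is $\bb+\be_{j'}=\bb'+\be_j$ (not $\bb+\be_j=\bb'+\be_{j'}$), but the contradiction you draw is unaffected.
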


\paragraph{The first-choice labeling.}
In particular, Proposition~\ref{prop:EV} is that a Sperner-admissible labeling
minimizing the number of non-monochromatic cells is a ``first-choice one" which
labels each vertex $\ba$ by the smallest coordinate $i$ such that $a_i > 0$.
Under this labeling, all the hyperedges $e(\bb)$ such
that $b_1 > 0$ are labeled monochromatically by $1$. The only
hyperedges that receive more than $1$ color are those where $b_1 =
0$, and the number of such hyperedges is exactly ${q+k-3 \choose k-2}$
(see \cite{EV14}). Here we give a proof of Proposition~\ref{prop:EV}.

\begin{figure}[h]

\caption{\small The first-choice labeling.}

\begin{tikzpicture}

\draw (-1,0) node {   };


\filldraw [fill=gray] (2,0) -- (3.2,0) -- (2.6,1) -- cycle;
\filldraw [fill=gray] (3.2,0) -- (4.4,0) -- (3.8,1) -- cycle;
\filldraw [fill=gray] (4.4,0) -- (5.6,0) -- (5,1) -- cycle;
\filldraw [fill=gray] (5.6,0) -- (6.8,0) -- (6.2,1) -- cycle;
\filldraw [fill=gray] (6.8,0) -- (8,0) -- (7.4,1) -- cycle;

\filldraw [fill=green] (2.6,1) -- (3.8,1) -- (3.2,2) -- cycle;
\filldraw [fill=green] (3.8,1) -- (5,1) -- (4.4,2) -- cycle;
\filldraw [fill=green] (5,1) -- (6.2,1) -- (5.6,2) -- cycle;
\filldraw [fill=green] (6.2,1) -- (7.4,1) -- (6.8,2) -- cycle;

\filldraw [fill=green] (3.2,2) -- (4.4,2) -- (3.8,3) -- cycle;
\filldraw [fill=green] (4.4,2) -- (5.6,2) -- (5,3) -- cycle;
\filldraw [fill=green] (5.6,2) -- (6.8,2) -- (6.2,3) -- cycle;

\filldraw [fill=green] (3.8,3) -- (5,3) -- (4.4,4) -- cycle;
\filldraw [fill=green] (5,3) -- (6.2,3) -- (5.6,4) -- cycle;

\filldraw [fill=green] (4.4,4) -- (5.6,4) -- (5,5) -- cycle;

\fill (2,0) circle (2pt);
\fill (3.2,0) circle (2pt);
\fill (4.4,0) circle (2pt);
\fill (5.6,0) circle (2pt);
\fill (6.8,0) circle (2pt);
\fill (8,0) circle (2pt);

\fill (2.6,1) circle (2pt);
\fill (3.8,1) circle (2pt);
\fill (5,1) circle (2pt);
\fill (6.2,1) circle (2pt);
\fill (7.4,1) circle (2pt);

\fill (3.2,2) circle (2pt);
\fill (4.4,2) circle (2pt);
\fill (5.6,2) circle (2pt);
\fill (6.8,2) circle (2pt);

\fill (3.8,3) circle (2pt);
\fill (5,3) circle (2pt);
\fill (6.2,3) circle (2pt);

\fill (4.4,4) circle (2pt);
\fill (5.6,4) circle (2pt);

\fill (5,5) circle (2pt);

\draw (5,5.3) node {$1$};
\draw (1.8,-0.3) node {$2$};
\draw (8.2,-0.3) node {$3$};

\draw (4.2,4.1) node {$1$};
\draw (3.6,3.1) node {$1$};
\draw (3,2.1) node {$1$};
\draw (2.4,1.1) node {$1$};

\draw (5.8,4.1) node {$1$};
\draw (6.4,3.1) node {$1$};
\draw (7,2.1) node {$1$};
\draw (7.6,1.1) node {$1$};

\draw (3.2,-0.3) node {$2$};
\draw (4.4,-0.3) node {$2$};
\draw (5.6,-0.3) node {$2$};
\draw (6.8,-0.3) node {$2$};

\draw (5,3.3) node {$1$};
\draw (4.4,2.3) node {$1$};
\draw (5.6,2.3) node {$1$};
\draw (3.8,1.3) node {$1$};
\draw (5,1.3) node {$1$};
\draw (6.2,1.3) node {$1$};

\end{tikzpicture}

\end{figure}
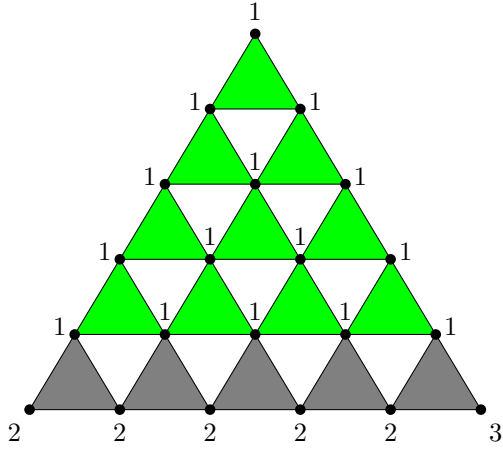


\begin{proof}
Consider the set of hyperedges $E_{k,q}$: observe that it can be written naturally
as $$E_{k,q} = \{ e(\bb): \bb \in V_{k,q-1} \}.$$ I.e., the hyperedges can be identified one-to-one
with the vertices in $V_{k,q-1}$. Recall that $e(\bb) = \{ \bb + \be_1, \bb + \be_2, \ldots, \bb + \be_k \}$.
Two hyperedges $e(\bb), e(\bb')$ share a vertex
if and only if $\bb' + \be_j = \bb + \be_i$ for some pair $i,j \in [k]$; or in other words if $\bb, \bb'$
are nearest neighbors in $V_{k,q-1}$ (differ by $\pm 1$ in exactly two coordinates). 

Consider a labeling $\ell: V_{k,q} \rightarrow [k]$. For each $i \in [k]$, let $C_i$ denote the set of points in $V_{k,q-1}$
representing the monochromatic hyperedges in color $i$,
$$ C_i = \{ \bb \in V_{k,q-1}: \forall \bv \in e(\bb); \ell(\bv) = i \}. $$
Define an injective mapping $\phi_i:C_i \rightarrow V_{k,q-2}$ as follows:
$$ \phi_i(\bb) = \bb - \be_i.$$
The image is indeed in $V_{k,q-2}$: if $\bb \in C_i$, we have $b_i > 0$, or else $e(\bb)$
would contain a vertex $\ba$ such that $a_i = 0$ and hence $e(\bb)$ could not be monochromatic in color $i$.
Therefore, $\bb - \be_i \in \ZZ_+^k$ and $(\bb - \be_i) \cdot \b1 = q-2$ which means $\bb - \be_i \in V_{k,q-2}$.
(Here, $\b1$ denotes the all-$1$'s vector.)

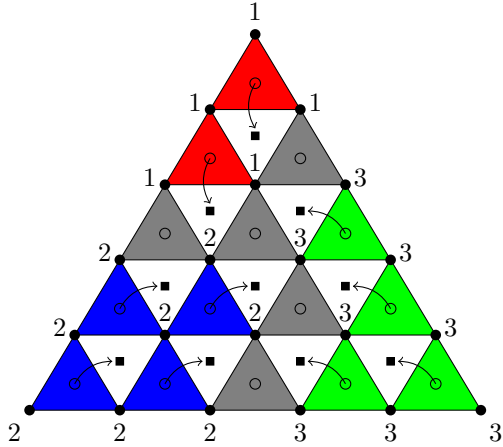
\begin{figure}[h]
\caption{\small The mappings $\phi_i:C_i \rightarrow V_{k,q-2}$. The hyperedges are represented by the empty circles; $C_i$ is the subset of them monochromatic in color $i$. The black squares represent $V_{k,q-2}$; note that each point in $V_{k,q-2}$ is the image of at most one monochromatic hyperedge.}

\begin{tikzpicture}

\draw (-1,0) node {   };


\filldraw [fill=blue] (2,0) -- (3.2,0) -- (2.6,1) -- cycle;
\filldraw [fill=blue] (3.2,0) -- (4.4,0) -- (3.8,1) -- cycle;
\filldraw [fill=gray] (4.4,0) -- (5.6,0) -- (5,1) -- cycle;
\filldraw [fill=green] (5.6,0) -- (6.8,0) -- (6.2,1) -- cycle;
\filldraw [fill=green] (6.8,0) -- (8,0) -- (7.4,1) -- cycle;

\filldraw [fill=blue] (2.6,1) -- (3.8,1) -- (3.2,2) -- cycle;
\filldraw [fill=blue] (3.8,1) -- (5,1) -- (4.4,2) -- cycle;
\filldraw [fill=gray] (5,1) -- (6.2,1) -- (5.6,2) -- cycle;
\filldraw [fill=green] (6.2,1) -- (7.4,1) -- (6.8,2) -- cycle;

\filldraw [fill=gray] (3.2,2) -- (4.4,2) -- (3.8,3) -- cycle;
\filldraw [fill=gray] (4.4,2) -- (5.6,2) -- (5,3) -- cycle;
\filldraw [fill=green] (5.6,2) -- (6.8,2) -- (6.2,3) -- cycle;

\filldraw [fill=red] (3.8,3) -- (5,3) -- (4.4,4) -- cycle;
\filldraw [fill=gray] (5,3) -- (6.2,3) -- (5.6,4) -- cycle;

\filldraw [fill=red] (4.4,4) -- (5.6,4) -- (5,5) -- cycle;

\fill (2,0) circle (2pt);
\fill (3.2,0) circle (2pt);
\fill (4.4,0) circle (2pt);
\fill (5.6,0) circle (2pt);
\fill (6.8,0) circle (2pt);
\fill (8,0) circle (2pt);

\fill (2.6,1) circle (2pt);
\fill (3.8,1) circle (2pt);
\fill (5,1) circle (2pt);
\fill (6.2,1) circle (2pt);
\fill (7.4,1) circle (2pt);

\fill (3.2,2) circle (2pt);
\fill (4.4,2) circle (2pt);
\fill (5.6,2) circle (2pt);
\fill (6.8,2) circle (2pt);

\fill (3.8,3) circle (2pt);
\fill (5,3) circle (2pt);
\fill (6.2,3) circle (2pt);

\fill (4.4,4) circle (2pt);
\fill (5.6,4) circle (2pt);

\fill (5,5) circle (2pt);

\draw (5,5.3) node {$1$};
\draw (1.8,-0.3) node {$2$};
\draw (8.2,-0.3) node {$3$};

\draw (4.2,4.1) node {$1$};
\draw (3.6,3.1) node {$1$};
\draw (3,2.1) node {$2$};
\draw (2.4,1.1) node {$2$};

\draw (5.8,4.1) node {$1$};
\draw (6.4,3.1) node {$3$};
\draw (7,2.1) node {$3$};
\draw (7.6,1.1) node {$3$};

\draw (3.2,-0.3) node {$2$};
\draw (4.4,-0.3) node {$2$};
\draw (5.6,-0.3) node {$3$};
\draw (6.8,-0.3) node {$3$};

\draw (5,3.3) node {$1$};
\draw (4.4,2.3) node {$2$};
\draw (5.6,2.3) node {$3$};
\draw (3.8,1.3) node {$2$};
\draw (5,1.3) node {$2$};
\draw (6.2,1.3) node {$3$};

\draw (2.6,0.35) circle (2pt);
\draw (3.8,0.35) circle (2pt);
\draw (5.0,0.35) circle (2pt);
\draw (6.2,0.35) circle (2pt);
\draw (7.4,0.35) circle (2pt);

\draw (3.2,1.35) circle (2pt);
\draw (4.4,1.35) circle (2pt);
\draw (5.6,1.35) circle (2pt);
\draw (6.8,1.35) circle (2pt);

\draw (3.8,2.35) circle (2pt);
\draw (5.0,2.35) circle (2pt);
\draw (6.2,2.35) circle (2pt);

\draw (4.4,3.35) circle (2pt);
\draw (5.6,3.35) circle (2pt);

\draw (5.0,4.35) circle (2pt);

\draw [fill=black] (3.15,0.6) rectangle (3.25,0.7);
\draw [fill=black] (4.35,0.6) rectangle (4.45,0.7);
\draw [fill=black] (5.55,0.6) rectangle (5.65,0.7);
\draw [fill=black] (6.75,0.6) rectangle (6.85,0.7);

\draw [fill=black] (3.75,1.6) rectangle (3.85,1.7);
\draw [fill=black] (4.95,1.6) rectangle (5.05,1.7);
\draw [fill=black] (6.15,1.6) rectangle (6.25,1.7);

\draw [fill=black] (4.35,2.6) rectangle (4.45,2.7);
\draw [fill=black] (5.55,2.6) rectangle (5.65,2.7);

\draw [fill=black] (4.95,3.6) rectangle (5.05,3.7);

\path[->] (2.6,0.35) edge [bend left] (3.1,0.65);
\path[->] (3.8,0.35) edge [bend left] (4.3,0.65);
\path[->] (3.2,1.35) edge [bend left] (3.7,1.65);
\path[->] (4.4,1.35) edge [bend left] (4.9,1.65);

\path[->] (6.2,0.35) edge [bend right] (5.7,0.65);
\path[->] (7.4,0.35) edge [bend right] (6.9,0.65);
\path[->] (6.8,1.35) edge [bend right] (6.3,1.65);
\path[->] (6.2,2.35) edge [bend right] (5.7,2.65);

\path[->] (5.0,4.35) edge [bend right] (5.0,3.75);
\path[->] (4.4,3.35) edge [bend right] (4.4,2.75);

\end{tikzpicture}

\end{figure}

Further, we claim that $\phi_i[C_i] \cap \phi_j[C_j] = \emptyset$ for every $i \neq j$. If not, there would be $\bb \in C_i$
and $\bb' \in C_j$ such that $\bb - \be_i = \bb' - \be_j$.
Then, the point $\ba = \bb + \be_j = \bb' + \be_i$ would be an element of both the hyperedge $e(\bb)$ and the hyperedge $e(\bb')$.
This contradicts the assumption that $e(\bb)$ is monochromatic in color $i$
and $e(\bb')$ is monochromatic in color $j$. So the sets $\phi_i[C_i]$ are pairwise disjoint subsets of $V_{k,q-2}$.
By the definition of $\phi_i$, we clearly have $|\phi_i[C_i]| = |C_i|$.
We conclude that the total number of monochromatic hyperedges is
$$ \sum_{i=1}^{k} |C_i| = \sum_{i=1}^{k} |\phi_i[C_i]|  \leq |V_{k,q-2}|.$$
The total number of hyperedges is $|E_{k,q}| = |V_{k,q-1}|$. Considering that $|V_{k,q}| = {q+k-1 \choose k-1}$ (the number of partitions of $q$ into a sum of $k$ nonnegative integers), we obtain
that the number of non-monochromatic hyperedges is
\begin{eqnarray*}
& |E_{k,q}| - \sum_{i=1}^{k} |C_i| \geq |V_{k,q-1}| - |V_{k,q-2}|  =   {q+k-2 \choose k-1} - {q+k-3 \choose k-1} = {q+k-3 \choose k-2}. &
\end{eqnarray*}
\end{proof}

\section{A labeling of $H_{k,q}$ with at most 4 colors on each hyperedge}
\label{sec:at-most-4}

We recall that Sperner's lemma states that any Sperner-admissible labeling of a subdivision of the simplex must contain a simplex with all $k$ colors. The hypergraph $H_{k,q}$ defined in Section~\ref{sec:sperner-simplex} is not a subdivision since it covers only a subset of the large simplex. It is easy to see that the conclusion of Sperner's lemma does not hold for $H_{k,q}$ --- for example for $k=3$, we can label a $2$-dimensional triangulation so that exactly one triangle has $3$ different colors, and this triangle is not in $E_{3,q}$.
(See Figure~2.)
Hence, each triangle in $E_{3,q}$ has at most $2$ colors. By an extension of this argument, we can label $H_{k,q}$ so that each hyperedge in $E_{k,q}$ contains at most $k-1$ colors. The question we ask in this section is, what is the minimum $c$ such that there is a Sperner-admissible labeling with at most $c$ different colors on each hyperedge in $E_{k,q}$? We prove the following result.

\begin{proposition}
\label{prop:4-colors}
For any $k \geq 4$ and $q \geq k^2$, there is a Sperner-admissible labeling of $H_{k,q} = (V_{k,q}, E_{k,q})$ such that every hyperedge in $E_{k,q}$ contains at most $4$ different colors.
\end{proposition}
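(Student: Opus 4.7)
The plan is to construct an explicit Sperner-admissible labeling $\ell:V_{k,q}\to[k]$ with at most $4$ colors per hyperedge, and verify the property by case analysis on the profile of each hyperedge $e(\bb)$.

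For $k=4$ the claim is immediate since a hyperedge has exactly $4$ vertices, so I would focus on $k\geq 5$. Observe that the two natural candidates both fail. The first-choice labeling $\ell(\ba)=\min\{i:a_i>0\}$ produces $s$ distinct colors on a hyperedge $e(\bb)$ with $\min\{i:b_i>0\}=s$, as verified by the same computation used in the proof of Proposition~\ref{prop:EV}; this is too many once $s\geq 5$. The Voronoi labeling $\ell(\ba)=\arg\max_i a_i$ fails in the opposite regime: on the balanced hyperedge with $b_i=(q-1)/k$ for all $i$, every $\bb+\be_i$ has a distinct argmax and the hyperedge receives all $k$ colors. So any working construction must be a hybrid that is stable under single-coordinate perturbations in both regimes.

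The approach I would pursue is a \emph{reduction-and-lift}: partition $[k]$ into four groups $G_1,\ldots,G_4$ of roughly equal size, and consider the projection $f:V_{k,q}\to V_{4,q}$ defined by $f(\ba)_j=\sum_{i\in G_j}a_i$. The image of a hyperedge $e(\bb)\subseteq H_{k,q}$ satisfies $f(e(\bb))\subseteq\{f(\bb)+\be_j:j\in[4]\}$, i.e.~it sits inside a hyperedge of $H_{4,q-1}$. Fix an admissible labeling $\ell':V_{4,q}\to[4]$; then $\ell'$ takes at most $4$ distinct values on $f(e(\bb))$. For each $\ba\in V_{k,q}$ define $\ell(\ba)\in G_{\ell'(f(\ba))}\cap S(\ba)$ as a stable representative of its group, e.g., $\ell(\ba)=\arg\max_{i\in G_{\ell'(f(\ba))}}a_i$ with a prescribed tie-break. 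If the within-group argmax is stable under the single-coordinate increment $\ba\mapsto\ba+\be_l$ for every $l\in[k]$, then all $i\in[k]$ with $\ell'(f(\bb+\be_i))=j^*$ receive the same lifted label $m_{j^*}(\bb):=\arg\max_{l\in G_{j^*}}b_l$, so $e(\bb)$ sees at most $4$ colors.

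The main obstacle is precisely the within-group stability when $\bb$ has tied maxima inside some $G_{j^*}$: there, adding $\be_i$ at a tied index $i$ can shift the group-argmax from $m_{j^*}(\bb)$ to $i$, potentially creating up to $|G_{j^*}|$ distinct labels on the hyperedge. The hypothesis $q\geq k^2$ enters here: it forces group sums of order $q/4\gg k$, so the macroscopic label $\ell'(f(\ba))$ is insensitive to a single-coordinate perturbation and the problematic behavior is confined to within-group tie-breaking. I would resolve the tie case either by refining the partition (replacing $G_1,\ldots,G_4$ by an adaptive choice of four representatives within the top-support of $\ba$) or by introducing an index-dependent asymmetry into the tie-break that restricts the ``new argmax'' to a controlled four-element subset. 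The proof is then completed by verifying, case by case on the joint profile of $\bb$ and of $\ell'$ on the four images $f(\bb)+\be_j$, that the total number of distinct colors assigned to the $k$ vertices of $e(\bb)$ never exceeds $4$.
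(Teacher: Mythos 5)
The heart of Proposition~\ref{prop:4-colors} is exactly the instability that you flag as your ``main obstacle,'' and your proposal does not actually overcome it. In your reduction-and-lift scheme, the within-group rule $\ell(\ba)=\arg\max_{l\in G_{j^*}}a_l$ has the same defect as the global argmax labeling that you correctly reject: take a hyperedge $e(\bb)$ in which all coordinates of $\bb$ inside some group $G_{j^*}$ are equal and suppose $\ell'\bigl(f(\bb)+\be_{j^*}\bigr)=j^*$ (nothing prevents this; for instance any admissible $\ell'$ may label these four image points with four different colors). Then for every $i\in G_{j^*}$ the increment $\ba=\bb+\be_i$ makes $i$ the strict within-group maximum, so $\ell(\ba)=i$, and the hyperedge already receives up to $|G_{j^*}|\approx k/4$ distinct colors. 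Your two suggested remedies (an adaptive choice of four representatives, or an asymmetric tie-break confining the new argmax to a four-element set) are precisely where the theorem lives and are not carried out; note that the within-group task is the original problem again, for a palette of size about $k/4$ --- choose for each nonnegative vector a coordinate in its support so that along all single-coordinate increments only $O(1)$ choices occur --- so without a genuinely new rule the reduction is circular. Also, the role you assign to $q\geq k^2$ is not right: large group sums do not make $\ell'(f(\ba))$ insensitive to a single-coordinate change (adjacent points of $V_{4,q}$ may get different labels under any admissible $\ell'$, however large $q$ is), and by Proposition~\ref{prop:EV} no admissible $\ell'$ can be constant on all hyperedges of $H_{4,q}$.

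For comparison, the paper resolves the tie/near-tie instability by a threshold rule rather than a plain maximum: sort the coordinates of $\ba$ decreasingly via $\pi$, let $t(\ba)$ be the largest $t$ with $a_{\pi(j)}\geq k-j+1$ for all $j\leq t$, and set $\ell(\ba)=\pi(t(\ba))$, the last ``Top'' coordinate (here $q\geq k^2$ is used only to guarantee $\max_i a_i\geq k$, so the rule is well defined and admissible). The decreasing thresholds $k,k-1,\ldots$ have the crucial property that, on a hyperedge $e(\bb)$, an increment $\bb+\be_i$ either leaves the Top set unchanged (contributing the label $\ell^*$ of $\bb$, or one alternative $\ell^*_2$ when $i$ is the last Top coordinate itself), or enlarges it --- and when it enlarges, it does so only for those $i$ with $b_i=k-\ell^*-1$, and then it enlarges to one and the same set $Top_+$ for all such $i$, contributing at most two further labels $\ell^*_3,\ell^*_4$. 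That common-enlargement property is the missing ingredient in your within-group argmax, and it is what caps the count at four; if you want to salvage your two-level architecture, your within-group rule would need an analogous property, at which point the group decomposition is no longer doing any work.
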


We note that this statement is not true for $q=1$ and $k > 4$ (since $E_{k,1}$ consists of a single simplex which has $k$ different colors). We have not identified the optimal lower bound on $q$ that allows our statement to hold.
Also, the statement could possibly hold with $2$ or $3$ colors instead of $4$; the number $4$ is just an artifact of our proof and we have no reason to believe that it is tight.

The intuition behind our construction is as follows:
We want to label the vertices so that the number of different colors on each hyperedge is small.
A natural choice is to label each vertex $\bv$ by its maximum-value coordinate. However, this does not work since
a hyperedge in the center of the simplex may receive all $k$ colors. The problem is that this labeling is possibly very sensitive to small changes in $\bv$. A more ``robust" labeling is one where we select a subset of ``top coordinates" and choose one among them according to another rule. This rule should be such that incrementing the coordinates one at a time does not change the label too many times. One such rule that works well is described below.

\begin{proof}
We define a labeling $\ell:V_{k,q} \rightarrow [k]$ as follows:
\begin{itemize}
\item Given $\ba \in V_{k,q}$, let $\pi:[k] \rightarrow [k]$ be a permutation such that $a_{\pi(1)} \geq a_{\pi(2)} \geq \ldots \geq a_{\pi(k)}$
(and if $a_{\pi(i)} = a_{\pi(i+1)}$, we order $\pi$ so that $\pi(i) < \pi(i+1)$).
\item Define $t(\ba)$ to be the maximum $t \in [k]$ such that $\forall 1 \leq j \leq t$, $a_{\pi(j)} \geq k-j+1$. We define the ``Top coordinates" of $\ba$ to be $Top(\ba) = (\pi(1), \ldots, \pi(t(\ba)))$ (an ordered set).
\item We define the label of $\ba$ to be $\ell(\ba) = \pi(t(\ba))$, the index of the ``last Top coordinate".
\end{itemize}
First, we verify that this is a well-defined Sperner-admissible labeling. Since $\sum_{i=1}^{k} a_i = q \geq k^2$, we have $a_{\pi(1)} = \max a_i \geq k$ and hence $1 \leq t(\ba) \leq k$. For each $\ba \in V_{k,q}$, we have: $a_{\ell(\ba)} = a_{\pi(t(\ba))} \geq k-t(\ba)+1 > 0$, since $t(\ba) \leq k$. Therefore, $\ell$ is Sperner-admissible. 

Now, consider a hyperedge $e(\bb) = (\bb + \be_1, \bb + \be_2, \ldots, \bb + \be_k)$ where $\bb \geq 0, \sum_{i=1}^{k} b_i = q-1$.
We claim that $\ell(\bb + \be_i)$ attains at most $4$ different values for $i=1,\ldots,k$.
Without loss of generality, assume that $b_1 \geq b_2 \geq \ldots \geq b_k$.
Define $\ell^*$ to be the label assigned to $\bb$ by our construction (note that $\bb$ is not a vertex in $V_{k,q}$ but we can still apply our definition):
$\ell^*$ is the maximum value in $[k]$ such that for all $1 \leq j \leq \ell^*$, $b_j \geq k-j+1$.
Hence, we have $Top(\bb) = \{1,2,\ldots,\ell^*\}$.

Let $i \in [k]$, $\ba = \bb + \be_i$, and let $\pi$ be the permutation such that $a_{\pi(1)} \geq \ldots \geq a_{\pi(k)}$ as above.
(Recall that for $\bb$, we assumed that the respective permutation is the identity.)
We consider the following cases:
\begin{itemize}
\item If $1 \leq i < \ell^*$, then we claim that $\ell(\ba) = \ell(\bb+\be_i) = \ell(\bb) = \ell^*$. In the rule for selecting $t(\ba)$, one of the first $\ell^*-1$ coordinates has been incremented compared to $\bb$, which possibly pushes $i$ forward in the ordering of the Top coordinates. However, the other coordinates remain unchanged, the condition $a_{\pi(j)} \geq k-j+1$ is still satisfied for $1 \leq j \leq \ell^*$, and $Top(\ba) = Top(\bb)$. In particular $\ell^*$ is still the last coordinate included in $Top(\ba)$ and hence $\ell(\ba) = \ell^*$.

\item If $i = \ell^*$, then $\ell(\ba) = \ell(\bb+\be_{\ell^*})$ is still one of the coordinates in $Top(\bb)$, possibly different from $\ell^*$ (due to a change in order, although we still have $Top(\ba) = Top(\bb)$) --- let us call this label $\ell^*_2$.

\item If $\ell^* < i \leq k$, then it is possible that in $\ba = \bb + \be_i$, we obtain additional Top coordinates ($Top(\ba) \supset Top(\bb)$). It could be $a_i = b_i + 1$ itself which is now included among the Top coordinates, and possibly additional coordinates that already satisfied the condition $b_j \geq k-j+1$ but were not selected due to the condition being false for $b_{\ell^*+1}$. If this does not happen and we have $Top(\ba) = Top(\bb)$, the label of $\ba$ is still $\ell(\ba) = \ell^*$ (because the ordering of the Top coordinates remains the same).

Assume now that $Top(\ba)$ has additional coordinates beyond $Top(\bb)$.
By the definition of $\ell^*$, we have $b_{\ell^*} \geq k-\ell^*+1$, and for each $j > \ell^*$, we have $b_j < k-\ell^*$; otherwise $j$ would have been still chosen in $Top(\bb)$. For $Top(\ba) = Top(\bb+\be_i)$ to grow beyond $Top(\bb)$, $a_i$ must become the $(\ell^*+1)$-largest coordinate and satisfy $a_i \geq k - \ell^*$.
The only way this can happen is that $b_i = k-\ell^*-1$ and hence $a_i = b_i + 1 = k-\ell^*$. In this case, $a_i$ is the maximum coordinate among $\{a_j: j > \ell^*\}$, and still smaller than $a_{\ell^*}$.
Therefore, $i$ will be included in $Top(\ba)$. Now, $Top(\ba)$ may grow further. However, note that the construction of $Top(\ba)$  will proceed in the same way for every $\ba = \bb + \be_i$ such that $b_i = k-\ell^*-1$. This is because all the coordinates equal to $k-\ell^*-1$ will be certainly included in $Top(\ba)$, and coordinates smaller than $k - \ell^* - 1$ remain the same in each of these cases (equal to the coordinates of $\bb$).
Therefore, the set $Top(\ba)$ will be the same in all these cases; let us call this set $Top_+$.

The label assigned to $\ba = \bb + \be_i$ is the index of the last coordinate included in $Top_+ = Top(\ba)$. Since $Top_+$ is the same whenever $Top(\ba) \neq Top(\bb)$, the label of $\ba$ will be the coordinate $j^*$ minimizing $b_j$ (and maximizing $j$ to break ties) among all $j \in Top_+$, unless $j^* = i$ in which case the last included coordinate might be another one. This gives potentially two additional colors, let us call them $\ell^*_3, \ell^*_4$, that are assigned to $\ba = \bb + \be_i$ for all $i > \ell^*$ where $b_i = k - \ell^* - 1$. For other choices of $i > \ell^*$, we have $Top(\bb+\be_i) = Top(\bb)$ and the label assigned to $\bb + \be_i$ is $\ell(\bb+\be_i) = \ell^*$.
\end{itemize}
To summarize, all the colors that appear in the labeling of $e(\bb)$ are included in $\{\ell^*, \ell^*_2, \ell^*_3, \ell^*_4 \}$.
\end{proof}

\section{Boundary-minimizing partitioning of the simplex}
\label{sec:simplex-partition}

Let us turn now to a geometric variant of Proposition~\ref{prop:EV}. We recall that Sperner's Lemma has a geometric variant known as the Knaster-Kuratowski-Mazurkiewicz Lemma \cite{KKM29}:

\medskip

\noindent
{\em Consider a covering of the simplex $\Delta_k$ by closed sets $A_1,\ldots,A_k$ such that each point $\bx \in \Delta_k$ is contained in some set $A_i$ such that $x_i > 0$. Then $\bigcap_{i=1}^{k} A_i \neq \emptyset$.}

\medskip

Here we consider a similar setup, but instead of the intersection of all sets, we are interested in the measure of the boundaries between pairs of adjacent sets. 
To avoid technicalities, let us assume that the $A_i$'s are closed, disjoint except on the boundary,
and each $A_i$ is disjoint from the face $\{ \bx \in \Delta_k: x_i = 0\}$.

\begin{definition}
\label{def:Sperner-partition}
A Sperner-admissible partition of $\Delta_k$ is a $k$-tuple of closed sets $(A_1,\ldots,A_k)$ such that
\begin{compactitem}
\item $\bigcup_{i=1}^{k} A_i = \Delta_k$,
\item $A_1, \ldots, A_k$ are disjoint except on their boundary,
\item $x_i > 0$ for every $\bx \in A_i$.
\end{compactitem}
We call the union of pairwise boundaries $\bigcup_{i \neq j} (A_i \cap A_j)$ the {\em \sep}.
\end{definition}

The question we ask here is, in analogy with Proposition~\ref{prop:EV}, what is the Sperner-admissible partition with the \sep~of minimum measure? 
A candidate partition is depicted in Figure~\ref{fig:Voronoi}, where $A_i$ is the set of all points in $\Delta_k$ for whom $\be_i$ is the closest vertex.
We call this the {\em Voronoi partition}.

\begin{figure}[h]
\caption{The Voronoi partition of a simplex.}
\label{fig:Voronoi}
\begin{tikzpicture}

\draw (-3,0) node {   };


\def\N{50}
\def\Ax{0}
\def\Ay{0}
\def\Bx{6}
\def\By{0}
\def\Cx{6.5}
\def\Cy{1.5}
\def\Dx{3}
\def\Dy{5}

\coordinate (A) at (\Ax, \Ay);
\coordinate (B) at (\Bx, \By);
\coordinate (C) at (\Cx, \Cy);
\coordinate (D) at (\Dx, \Dy);

\coordinate(AB) at (0.5*\Ax+0.5*\Bx, 0.5*\Ay+0.5*\By);
\coordinate(BC) at (0.5*\Bx+0.5*\Cx, 0.5*\By+0.5*\Cy);
\coordinate(CD) at (0.5*\Cx+0.5*\Dx, 0.5*\Cy+0.5*\Dy);
\coordinate(AD) at (0.5*\Ax+0.5*\Dx, 0.5*\Ay+0.5*\Dy);
\coordinate(AC) at (0.5*\Ax+0.5*\Cx, 0.5*\Ay+0.5*\Cy);
\coordinate(BD) at (0.5*\Bx+0.5*\Dx, 0.5*\By+0.5*\Dy);

\coordinate(ABC)  at (0.333*\Ax+0.333*\Bx+0.333*\Cx, 0.333*\Ay+0.333*\By+0.333*\Cy);
\coordinate(ABD)  at (0.333*\Ax+0.333*\Bx+0.333*\Dx, 0.333*\Ay+0.333*\By+0.333*\Dy);
\coordinate(ACD)  at (0.333*\Ax+0.333*\Cx+0.333*\Dx, 0.333*\Ay+0.333*\Cy+0.333*\Dy);
\coordinate(BCD)  at (0.333*\Bx+0.333*\Cx+0.333*\Dx, 0.333*\By+0.333*\Cy+0.333*\Dy);

\coordinate(Z)  at (0.25*\Ax+0.25*\Bx+0.25*\Cx+0.25*\Dx, 0.25*\Ay+0.25*\By+0.25*\Cy+0.25*\Dy);

\draw [gray] (Z) -- (AB);
\draw [gray] (Z) -- (AC);
\draw [gray] (Z) -- (AD);
\draw [gray] (Z) -- (BC);
\draw [gray] (Z) -- (BD);
\draw [gray] (Z) -- (CD);

\filldraw [cyan, opacity=0.2] (A) -- (C) -- (D) -- cycle;
\filldraw [cyan, opacity=0.2] (A) -- (B) -- (C) -- cycle;
\filldraw [cyan, opacity=0.2] (Z) -- (ACD) -- (CD) -- (BCD) -- cycle;
\filldraw [cyan, opacity=0.2] (Z) -- (ABC) -- (BC) -- (BCD) -- cycle;
\filldraw [cyan, opacity=0.2] (Z) -- (ABD) -- (AD) -- (ACD) -- cycle;
\filldraw [cyan, opacity=0.2] (Z) -- (ABD) -- (BD) -- (BCD) -- cycle;
\filldraw [cyan, opacity=0.2] (Z) -- (ABC) -- (AC) -- (ACD) -- cycle;
\filldraw [cyan, opacity=0.2] (Z) -- (ABC) -- (AB) -- (ABD) -- cycle;

\draw (ABC) -- (AB) -- (ABD);
\draw (ABC) -- (AC) -- (ACD);
\draw (ABC) -- (BC) -- (BCD);
\draw (ABD) -- (AD) -- (ACD);
\draw (ABD) -- (BD) -- (BCD);
\draw (ACD) -- (CD) -- (BCD);
\draw (ABC) -- (Z) -- (ABD);
\draw (ACD) -- (Z) -- (BCD);

\color{black}
\draw [thick] (A) -- (B) -- (C) -- (D) -- (A);
\draw [thick] (A) -- (C);
\draw [thick] (B) -- (D);

\color{black}
\fill (A) circle (2pt);
\fill (B) circle (2pt);
\fill (C) circle (2pt);
\fill (D) circle (2pt);

\draw (A) [left] node {$\be_1$};
\draw (B) [right]  node {$\be_2$};
\draw (C) [right] node {$\be_3$};
\draw (D) [right] node {$\be_4$};

\end{tikzpicture}
\end{figure}
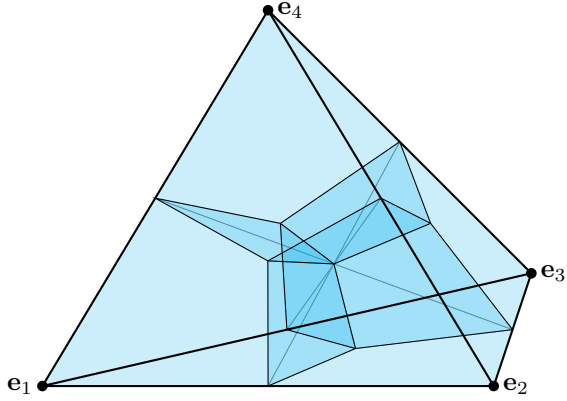

We prove that for the regular simplex $\Delta_k$ this is indeed the optimal partition (along with other, similar configurations).
In the following, we denote by $\mu_k$ the usual Lebesgue measure on $\RR^k$, and by $\mu_\ell$ ($\ell<k$) the $\ell$-dimensional Minkowski content.

\begin{definition}
For $A \subset \RR^k$, the $\ell$-dimensional Minkowski content is (if the limit exists)
$$ \mu_\ell(A) = \lim_{\epsilon \rightarrow 0^+} \frac{\mu_k(A_\epsilon)}{\alpha_{k-\ell} \epsilon^{k-\ell}} $$
where $A_\epsilon = \{ \by \in \RR^k: \exists \bx \in A, \|\bx-\by\| \leq \epsilon \}$ is the $\epsilon$-neighborhood of $A$
and $\alpha_{k-\ell}$ is the volume of a unit ball in $\RR^{k-\ell}$.
We also define $\mu_\ell^+(A)$ to be the upper limit and $\mu_\ell^-(A)$ the lower limit of the expression above.
\end{definition}

We remark that for $\ell$-rectifiable sets (polyhedral faces, smooth surfaces, etc.) the notion of Minkowski content coincides with that of Hausdorff measure (under suitable normalization).

\begin{theorem}
\label{thm:simplex-partition}
For every Sperner-admissible partition $(A_1,\ldots,A_k)$ of $\Delta_k$,
$$ \mu^-_{k-2}\left( \bigcup_{i \neq j} (A_i \cap A_j) \right) \geq \frac{k-1}{\sqrt{2}} \mu_{k-1}(\Delta_k) $$
and the Voronoi partition achieves this with equality.
\end{theorem}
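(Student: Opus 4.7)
My strategy is to apply Stokes's theorem on each $A_i$ with a single well-chosen linear vector field, then sum over $i$ so that the ambient contribution cancels pair by pair and only the pairwise interfaces survive. The vector field I would use is $V_i(\bx):=\bx-\be_i$. Three properties make it the right choice: (a) $V_i$ is tangent to the affine hull of $\Delta_k$ everywhere, since $\sum_\ell (V_i)_\ell\equiv 0$; (b) as a linear map with derivative the identity of $\RR^k$, its restriction to the $(k-1)$-dimensional tangent space $T\Delta_k$ has trace $k-1$, so the intrinsic divergence of $V_i$ on $\Delta_k$ equals $k-1$; (c) on each face $F_j$ with $j\neq i$, the identity $(\bx-\be_i)\cdot(\be_j-\mathbf{1}/k)=x_j-\delta_{ij}$ vanishes when $x_j=0$, so $V_i$ is tangent to $\partial\Delta_k$ along $F_j$. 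Crucially, Sperner-admissibility ($A_i\cap F_i=\emptyset$) guarantees that the only face where (c) would fail is automatically disjoint from $A_i$.

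Applying the divergence theorem to $A_i\subseteq\Delta_k$, and writing $B_i:=\partial A_i\cap\mathrm{int}(\Delta_k)\subseteq S$ with $n_i$ the outer unit normal of $A_i$ within $\Delta_k$, I would obtain
\[
(k-1)\,\mu_{k-1}(A_i)\;=\;\int_{B_i}(\bx-\be_i)\cdot n_i\,d\mu_{k-2}.
\]
Next I would sum over $i$ and reorganize by interfaces: each $A_i\cap A_j$ appears in $B_i$ with normal $n_i$ and in $B_j$ with normal $-n_i$, so the two contributions combine into $(\bx-\be_i)\cdot n_i+(\bx-\be_j)\cdot(-n_i)=(\be_j-\be_i)\cdot n_i$, and the position vector $\bx$ cancels out. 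This yields
\[
(k-1)\,\mu_{k-1}(\Delta_k)\;=\;\sum_{i<j}\int_{A_i\cap A_j}(\be_j-\be_i)\cdot n_i\,d\mu_{k-2}.
\]
By Cauchy--Schwarz the pointwise integrand is bounded above by $\|\be_j-\be_i\|=\sqrt{2}$, so the right-hand side is at most $\sqrt{2}\,\mu_{k-2}(S)$, and the desired inequality follows. For the Voronoi partition, the interface $A_i^*\cap A_j^*$ lies in the hyperplane $\{x_i=x_j\}$ whose unit normal in $\Delta_k$ is exactly $(\be_j-\be_i)/\sqrt{2}$; Cauchy--Schwarz is then saturated everywhere and equality is attained.

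The main obstacle is regularity: the divergence theorem as applied above needs $\partial A_i$ to be rectifiable (or $A_i$ of locally finite intrinsic perimeter), which is not guaranteed for arbitrary closed Sperner-admissible partitions. The standard remedy is to recast the argument in the Caccioppoli/BV framework, replacing $\partial A_i$ by the De~Giorgi reduced boundary $\partial^* A_i$ equipped with the measure-theoretic outer normal: the divergence identity, the face-tangency property (c), and the interface pairing all transfer verbatim, and the resulting bound is $(k-1)\,\mu_{k-1}(\Delta_k)\leq \sqrt 2\,\mathcal H^{k-2}(S)$. Since the separator arising from such a partition is $(k-2)$-rectifiable, the lower Minkowski content $\mu^-_{k-2}(S)$ coincides with $\mathcal H^{k-2}(S)$, giving the statement as written; the case $\mu^-_{k-2}(S)=+\infty$ is trivial, so one may restrict to finite-perimeter partitions from the start.
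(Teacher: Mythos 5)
Your approach is genuinely different from the paper's: you prove the identity $(k-1)\,\mu_{k-1}(\Delta_k)=\sum_{i<j}\int_{A_i\cap A_j}(\be_j-\be_i)\cdot n_i\,d\mathcal{H}^{k-2}$ by a divergence/calibration argument with the tangential fields $V_i(\bx)=\bx-\be_i$ (using Sperner-admissibility exactly where it is needed, to kill the flux through the face $x_i=0$), and then conclude by Cauchy--Schwarz; the equality for the Voronoi partition falls out from saturation of Cauchy--Schwarz, which is a nice alternative to the paper's inductive volume computation. For partitions whose pieces have finite perimeter this core argument is sound (modulo invoking Federer's theorem on Caccioppoli partitions to justify the a.e.\ pairing of reduced boundaries with opposite normals, and some care with Gauss--Green up to $\partial\Delta_k$). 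The paper instead argues elementarily: it removes the $\epsilon$-neighborhood $S_\epsilon$ of the separator, translates $A_i\setminus S_\epsilon$ by $-\epsilon\sqrt{2}\,\be_i$, shows the translates are pairwise disjoint subsets of $(1-\epsilon\sqrt{2})\Delta_k$ (closedness plus admissibility), and compares $(k-1)$-volumes, which bounds $\mu_{k-1}(S_\epsilon)$ and hence the lower Minkowski content directly, with no regularity hypotheses whatsoever.

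The genuine gap in your write-up is the reduction of the general statement to the BV setting. The theorem is about arbitrary closed Sperner-admissible partitions and the lower Minkowski content of $S=\bigcup_{i\neq j}(A_i\cap A_j)$. Your claims that ``the separator arising from such a partition is $(k-2)$-rectifiable'' and that $\mu^-_{k-2}(S)=\mathcal{H}^{k-2}(S)$ are unjustified and false in general: $S$ is merely closed, it can be far from rectifiable, and Minkowski content and Hausdorff measure need not agree (Federer's identification, 3.2.39, requires a closed rectifiable set). Likewise ``one may restrict to finite-perimeter partitions from the start'' is not automatic: to get finite perimeter from $\mu^-_{k-2}(S)<\infty$ you must first pass to $\mathcal{H}^{k-2}(S)<\infty$ (a covering argument, with a dimensional constant) and then invoke Federer's criterion, and at the end you must come back from your bound on $\mathcal{H}^{k-2}\bigl(\bigcup_i\partial^*A_i\bigr)$ to a bound on $\mu^-_{k-2}(S)$ with constant $1$ --- which requires exhausting the rectifiable set $\bigcup_i\partial^*A_i$ by compact pieces lying in $C^1$ hypersurfaces and applying Federer 3.2.39 to those, not the blanket identification you assert. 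So the skeleton is repairable, but only with a layer of geometric measure theory that your proposal glosses over and that the paper's translation argument avoids entirely.
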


First, let us analyze the Voronoi partition and more generally the following kind of partition.

\begin{lemma}
\label{lem:Voronoi}
For any $\bz$ in the interior of $\Delta_k$, the partition $(A^z_1,\ldots,A^z_k)$ where
$$ A^z_i = \{ \bx \in \Delta_k: x_i-z_i = \max_{1 \leq j \leq k} (x_j - z_j) \} $$
satisfies
$$ \mu_{k-2}\left( \bigcup_{i \neq j} (A^z_i \cap A^z_j) \right)
= \frac{k-1}{\sqrt{2}} \mu_{k-1}(\Delta_k) = \frac{1}{(k-2)!} \sqrt{\frac{k}{2}}.$$
\end{lemma}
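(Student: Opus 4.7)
The plan is to apply the divergence theorem on each cell $A^z_i$ to a cleverly chosen vector field and then sum over $i$. I would take $F_i := \be_i - \b1/k$, i.e., the orthogonal projection of $\be_i$ onto the hyperplane $\{\bx : \sum_\ell x_\ell = 1\}$; this is the gradient of the affine function $\bx \mapsto x_i - z_i$ restricted to the simplex, and crucially it does \emph{not} depend on $\bz$, which is ultimately why the final answer won't either. Since $F_i$ is constant on $A^z_i$, it has zero divergence there, so the divergence theorem asserts that the total outward flux of $F_i$ across $\partial A^z_i$ vanishes.

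Next I would catalog the boundary contributions. The set $\partial A^z_i$ decomposes into boundary-of-simplex pieces $A^z_i \cap F_\ell$, where $F_\ell = \{\bx \in \Delta_k : x_\ell = 0\}$, and internal pieces $A^z_i \cap A^z_j$ for $j \neq i$. I observe that $A^z_i$ does not touch $F_i$: on $\{x_i = 0\}$ we would need $-z_i = \max_\ell (x_\ell - z_\ell)$, yet this maximum is always at least the average $\frac{1}{k}\sum_\ell (x_\ell - z_\ell) = 0$. Direct computation then gives the outward unit normals and fluxes: on $F_\ell$, using $n_\ell = -(\be_\ell - \b1/k)/\sqrt{(k-1)/k}$, I get $F_i \cdot n_\ell = 1/\sqrt{k(k-1)}$ (for $i \neq \ell$); and on $A^z_i \cap A^z_j$, using the outward normal $n_{ij} = (\be_j - \be_i)/\sqrt{2}$, I get $F_i \cdot n_{ij} = -1/\sqrt{2}$.

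The vanishing-flux identity on $A^z_i$ thus reads
\[
\frac{1}{\sqrt{2}} \sum_{j \neq i} \mu_{k-2}(A^z_i \cap A^z_j) \;=\; \frac{1}{\sqrt{k(k-1)}} \sum_{\ell \neq i} \mu_{k-2}(A^z_i \cap F_\ell).
\]
Summing over $i \in [k]$ and letting $M = \sum_{i<j} \mu_{k-2}(A^z_i \cap A^z_j)$, the left-hand side becomes $\sqrt{2}\, M$ (each unordered pair counted twice). On the right, for each fixed $\ell$ the sets $\{A^z_i \cap F_\ell : i \neq \ell\}$ partition $F_\ell$, whose $(k-2)$-volume is $\sqrt{k-1}/(k-2)!$ for the regular simplex. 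So the right-hand side equals $k\sqrt{k-1}/((k-2)!\sqrt{k(k-1)}) = \sqrt{k}/(k-2)!$, giving $M = \frac{1}{(k-2)!}\sqrt{k/2}$. Using $\mu_{k-1}(\Delta_k) = \sqrt{k}/(k-1)!$, one checks $M = \frac{k-1}{\sqrt{2}}\mu_{k-1}(\Delta_k)$, as claimed.

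I do not foresee any genuine obstacle: each piece $A^z_i \cap A^z_j$ is a convex polytope contained in a single hyperplane, so its $(k-2)$-Minkowski content coincides with its usual $(k-2)$-Hausdorff measure, and the piecewise divergence theorem applies cleanly. The only point requiring some care is tracking the signs and normalizations of the normal vectors, but the two dot-product computations above are routine, and everything combines into the stated formula.
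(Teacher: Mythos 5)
Your argument is correct, and it takes a genuinely different route from the paper. You apply the divergence theorem inside the hyperplane $\{\sum_\ell x_\ell = 1\}$ to the constant (hence divergence-free) fields $\be_i - \b1/k$ on each cell $A^z_i$, using two facts special to Voronoi-type (power-diagram) cells: the wall $A^z_i \cap A^z_j$ lies in a hyperplane with unit normal $(\be_j-\be_i)/\sqrt{2}$, and $A^z_i$ misses the facet $\{x_i=0\}$ (your averaging argument for this is the right one, since $\bz$ is interior). Summing the per-cell flux identities then converts the unknown wall areas into the known facet areas, and all the constants check out: $F_i\cdot n_\ell = 1/\sqrt{k(k-1)}$, $F_i\cdot n_{ij}=-1/\sqrt{2}$, $\mu_{k-2}(F_\ell)=\sqrt{k-1}/(k-2)!$, giving $M=\frac{1}{(k-2)!}\sqrt{k/2}=\frac{k-1}{\sqrt2}\mu_{k-1}(\Delta_k)$. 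The paper instead writes the \sep\ as the union of cones $\mbox{conv}(S_i\cup\{\bz\})$ over the facet \sep s $S_i$ and computes its measure by induction on $k$ via the pyramid formula together with $\sum_i h'_i = h_k$; the paper also gives a second proof by showing the shrink-and-shift argument of Lemma~\ref{lem:sep-bound} is tight for these partitions. Your route avoids induction entirely and makes the independence from $\bz$ transparent (the fields and the facet areas do not see $\bz$), at the cost of being tied to the specific normals of power-diagram walls, whereas the paper's induction exposes the cone structure of the \sep\ and reuses the quantities $h_k$ and $\mu_{k-1}(\Delta_k)$ computed along the way. Two small points you gloss over are at the same level of rigor as the paper: passing from $\mu_{k-2}$ of the union to the sum over pairs (triple intersections are $(k-3)$-dimensional), and identifying Minkowski content with Hausdorff measure on the polyhedral pieces; likewise the covering $\bigcup_{i\neq\ell}(A^z_i\cap F_\ell)=F_\ell$ up to lower-dimensional overlaps deserves the one-line justification you essentially already have.
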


We call this kind of partition ``Voronoi-type".\footnote{We note that these partitions are also known
as ``power diagrams".}
We note that that for $\bz = (\frac{1}{k}, \frac{1}{k}, \ldots, \frac{1}{k})$ we obtain the Voronoi partition in Figure~\ref{fig:Voronoi}. Other choices of $\bz$ correspond to similar configurations where all the colors meet at the point $\bz$. Note that $\bz$ is the ``rainbow point" guaranteed by the Knaster-Kuratowski-Mazurkiewicz Lemma. 

\begin{proof}
First let us compute some basic quantities that we will need. The sides of our simplex $\Delta_k$ have length $\sqrt{2}$. Denote by $h_k$ the height of $\Delta_k$, that is the distance of any vertex from the opposite facet. We have
$$ h_k = \left\| (1,0,\ldots,0) - (0,\frac1{k-1},\ldots,\frac1{k-1}) \right\| = \sqrt{1 + (k-1) \cdot \frac{1}{(k-1)^2}} = \sqrt{\frac{k}{k-1}}.$$
The volume of the simplex can be computed inductively as follows; we have $\mu_1(\Delta_2) = \sqrt{2}$, and
$ \mu_k(\Delta_{k+1}) = \frac{1}{k} h_{k+1} \cdot \mu_{k-1}(\Delta_{k}).$
This implies $$\mu_{k-1}(\Delta_{k}) = \frac{\sqrt{k}}{(k-1)!}.$$
Now let us compute the measure of the \sep~for the partition $(A^z_1,\ldots,A^z_k)$ defined above, by induction. The \sep~can be described explicitly as
$$ \bigcup_{i \neq j} (A^z_i \cap A^z_j) = \{ \bx \in \Delta_k: \exists i \neq j, x_i-z_i = x_j-z_j = \max_{1 \leq \ell \leq k} x_\ell-z_\ell \}.$$
For $k=2$, $A^z_1 \cap A^z_2$ is just a single point, and $\mu_0(A^z_1 \cap A^z_2) = 1$.
For $k \geq 3$, denote by $S$ the \sep~for $(A^z_1,\ldots,A^z_k)$ and define $S_i = S \cap \mbox{conv}(\{\be_j: j \neq i\})$, the \sep~restricted to the facet opposite vertex $\be_i$. Since $S_i$ is a Voronoi-type \sep~for $\Delta_{k-1}$, by induction we assume that $\mu_{k-3}(S_i) = \frac{1}{(k-3)!} \sqrt{\frac{k-1}{2}}$. The \sep~$S$ can be written as $S = \bigcup_{i=1}^{k} \mbox{conv}(S_i \cup \{\bz\})$, see Figure~\ref{fig:Voronoi}. Denote by $h'_i$ the distance of $\bz$ from the facet containing $S_i$. By the pyramid formula in dimension $k-2$, 
$$\mu_{k-2}(\mbox{conv}(S_i \cup \{\bz\})) = \frac{1}{k-2} h'_i \mu_{k-3}(S_i) = \frac{h'_i}{(k-2)!} \sqrt{\frac{k-1}{2}}.$$
By a simple calculation, $\sum_{i=1}^{k} h'_i = h_k = \sqrt{\frac{k}{k-1}}$. 
The sets $\mbox{conv}(S_i \cup \{\bz\})$ are disjoint except for lower-dimensional intersections. Hence,
$$ \mu_{k-2}(S) = \sum_{i=1}^{k} \mu_{k-2}(\mbox{conv}(S_i \cup \{\bz\}))
 = \sum_{i=1}^{k} \frac{h'_i}{(k-2)!} \sqrt{\frac{k-1}{2}} = \frac{1}{(k-2)!} \sqrt{\frac{k}{2}}.$$
\end{proof}

Thus the proof of Theorem~\ref{thm:simplex-partition} will be complete if we prove the following bound.

\begin{lemma}
\label{lem:sep-bound}
For every Sperner-admissible partition $(A_1,\ldots,A_k)$ of $\Delta_k$,
$$ \mu_{k-2}^-\left(\bigcup_{i \neq j} (A_i \cap A_j)\right) \geq \frac{k-1}{\sqrt{2}} \mu_{k-1}(\Delta_k)
 = \frac{1}{(k-2)!} \sqrt{\frac{k}{2}}.$$
\end{lemma}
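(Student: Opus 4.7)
The plan is to prove the bound by induction on $k$, using a coarea-type slicing argument to reduce the $(k-2)$-dimensional estimate in $\Delta_k$ to the $(k-3)$-dimensional estimate in $\Delta_{k-1}$.

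\textbf{Base case $k=2$.} Here $\Delta_2$ is a segment of length $\sqrt{2}$ between $\be_1$ and $\be_2$. Sperner-admissibility forces $\be_1 \in A_1$ and $\be_2 \in A_2$, so by closedness and connectedness $A_1 \cap A_2 \neq \emptyset$. Hence $\mu_0(A_1 \cap A_2) \geq 1 = \frac{1}{\sqrt{2}} \mu_1(\Delta_2)$, matching the bound.

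\textbf{Inductive step via slicing.} Fix an index $i \in [k]$ and slice $\Delta_k$ by the level sets $H_t := \{\bx \in \Delta_k : x_i = t\}$ for $t \in [0,1]$. Each slice $H_t$ is isometric to the scaled simplex $(1-t)\Delta_{k-1}$. The gradient of $x_i$, viewed as a function on the simplex hyperplane, has norm $\sqrt{(k-1)/k}$ (the projection of $\be_i$ onto the hyperplane $\sum x_j=1$), so the coarea formula (applied to $S$ regarded as a $(k-2)$-rectifiable subset of the $(k-1)$-dimensional simplex hyperplane) gives
$$ \mu_{k-2}^-(S) \;\geq\; \sqrt{\tfrac{k}{k-1}} \int_0^1 \mu_{k-3}^-(S \cap H_t)\, dt. $$
Ideally, an induced per-slice bound $\mu_{k-3}^-(S \cap H_t) \geq \frac{(1-t)^{k-3}}{(k-3)!}\sqrt{(k-1)/2}$ would, via $\int_0^1 (1-t)^{k-3}\,dt = 1/(k-2)$, yield exactly $\mu_{k-2}^-(S) \geq \frac{1}{(k-2)!}\sqrt{k/2}$, which is the desired inequality.

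\textbf{The per-slice estimate, and the main obstacle.} On $H_t$, the parts $A_j \cap H_t$ for $j \neq i$ form a Sperner-admissible configuration inside $(1-t)\Delta_{k-1}$ (each avoids its corresponding face $\{x_j=0\}\cap H_t$), so if $A_i \cap H_t = \emptyset$ the inductive hypothesis applies directly. The main difficulty is that $A_i \cap H_t$ can be nonempty and in fact may swallow the whole slice (as it does in the Voronoi example, for slices close to $\be_i$), in which case the induction hypothesis does not literally apply to $H_t$ in isolation. This is the principal obstacle: a naive slice-by-slice induction with $i$ fixed is insufficient. The plan to overcome it is to average over the slicing direction $i$: by symmetry among the $k$ coordinates, combined with the identity $\sum_i \nabla x_i = 0$ on the simplex hyperplane, slices of $\Delta_k$ in which $A_i$ dominates must be compensated by slices in the orthogonal directions where $A_i$ is absent and the other parts produce the required separating measure. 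Equivalently, one shows the strengthened statement that at almost every $t$, \emph{some} choice of slicing direction sees a genuine Sperner-admissible $(k-1)$-partition in its slice, and accounts for this across all directions.

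\textbf{Conclusion.} Once the per-slice bound is established in such an averaged form, integrating as above and matching the constant $\sqrt{k/(k-1)} \cdot \sqrt{(k-1)/2}/(k-2)! = \sqrt{k/2}/(k-2)!$ completes the induction, producing exactly the value attained by the Voronoi partition in Lemma~\ref{lem:Voronoi} and thereby establishing the tightness of the bound.
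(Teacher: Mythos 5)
There is a genuine gap: the heart of your argument --- the per-slice estimate, or its ``averaged over slicing directions'' replacement --- is never actually proved, and as you yourself note, the naive version is false. Indeed it already fails for the extremal Voronoi partition: slicing in direction $i$, every slice $H_t$ with $t>1/2$ lies entirely inside $A_i$, so $S\cap H_t=\emptyset$ there and the claimed bound $\mu_{k-3}^-(S\cap H_t)\geq \frac{(1-t)^{k-3}}{(k-3)!}\sqrt{(k-1)/2}$ cannot hold for those $t$. Your proposed fix --- that slices dominated by $A_i$ ``must be compensated'' by other slicing directions --- is stated as a hope, not an argument; you would have to formulate and prove a quantitative statement about how the $k$ coarea integrals combine, and it is not at all clear that the constants work out (note also that $|\nabla^S x_i|$ can be strictly smaller than $\sqrt{(k-1)/k}$ on the portions of $S$ not orthogonal to the slices, so each directional coarea integral undercounts $\mu_{k-2}(S)$ in a direction-dependent way). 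A second, independent problem is that the lemma is stated for the \emph{lower Minkowski content} of an arbitrary closed separating set: the coarea formula you invoke requires $(k-2)$-rectifiability (or at least replacing Minkowski content by Hausdorff measure with a rectifiability hypothesis), which is not available here; Minkowski content does not satisfy a Fubini/coarea-type inequality for general closed sets.

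For comparison, the paper's proof avoids both difficulties with a global, non-inductive argument: it removes the $\epsilon$-neighborhood $S_\epsilon$ of the separating set to get disjoint sets $A_i'=A_i\setminus S_\epsilon$, translates each by $-\epsilon\sqrt{2}\,\be_i$, shows the translates are pairwise disjoint subsets of the shrunken simplex $(1-\epsilon\sqrt{2})\Delta_k$ (disjointness uses closedness plus a midpoint argument; containment uses Sperner-admissibility, i.e.\ $x_i\geq\epsilon_0>0$ on $A_i$), and then compares $(k-1)$-dimensional volumes to get $\mu_{k-1}(S_\epsilon)\geq\bigl(1-(1-\epsilon\sqrt{2})^{k-1}\bigr)\mu_{k-1}(\Delta_k)$, which upon dividing by $2\epsilon$ and letting $\epsilon\to 0$ gives exactly the stated bound on $\mu^-_{k-2}(S)$ directly from the definition of lower Minkowski content. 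If you want to salvage your slicing strategy you would need to supply the averaged per-slice lemma in full; as written, the proposal is a program rather than a proof.
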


\begin{proof}
We pursue an approach similar to the proof of Proposition~\ref{prop:EV}, with some additional technicalities.
The high-level approach is to shrink the sets $A_i$ somewhat, by excluding a small neighborhood of the \sep. This creates a buffer zone between the shrunk sets $A'_i$ (yellow in Figure~\ref{fig:shrinking}) whose measure corresponds to the measure of the \sep. Since we have this extra space, we are able to push the sets $A'_i$ closer together and obtain sets $A''_i$ that fit inside a slightly smaller simplex. The difference between the volume of this simplex and the original one gives a bound on the measure of the \sep.

First, let $\epsilon_0 = \inf_{i \in [k], \bx \in A_i} x_i$. Recall that $x_i > 0$ for each $\bx \in A_i$, and moreover each $A_i$ is closed. 
Hence $\epsilon_0 > 0$.

Define $S = \bigcup_{i \neq j }(A_i \cap A_j)$, the \sep~whose measure we are trying to lower-bound.
Fix $\epsilon \in (0,\frac12\epsilon_0)$ (eventually we will let $\epsilon \rightarrow 0$)
and define $S_\epsilon$ as the $\epsilon$-neighborhood of $S$,
$$ S_\epsilon = \left\{ \bx \in \Delta_k: \exists \by \in S, \|\bx-\by\| \leq \epsilon \right\}.$$
We define subsets $A'_i \subset A_i$ as follows:
$$ A'_i = A_i \setminus S_\epsilon.$$
Thus we have $\bigcup_{i=1}^{k} A'_i = \Delta_k \setminus S_\epsilon$. Also, the sets $A'_i$ are clearly disjoint (see Figure~\ref{fig:shrinking}).

\begin{figure}[h]

\caption{The construction of $A'_i$ and $A''_i$.}

\label{fig:shrinking}

\begin{tikzpicture}

\def\Ax{0}
\def\Ay{0}
\def\Bx{5}
\def\By{0}
\def\Cx{2.5}
\def\Cy{4}

\coordinate (A) at (\Ax, \Ay);
\coordinate (B) at (\Bx, \By);
\coordinate (C) at (\Cx, \Cy);

\coordinate (AB) at (0.5*\Ax + 0.5*\Bx, 0.5*\Ay + 0.5*\By);
\coordinate (ABA) at (0.52*\Ax + 0.48*\Bx, 0.52*\Ay + 0.48*\By);
\coordinate (ABB) at (0.48*\Ax + 0.52*\Bx, 0.48*\Ay + 0.52*\By);

\coordinate (AC) at (0.5*\Ax + 0.5*\Cx, 0.5*\Ay + 0.5*\Cy);
\coordinate (ACA) at (0.52*\Ax + 0.48*\Cx, 0.52*\Ay + 0.48*\Cy);
\coordinate (ACC) at (0.48*\Ax + 0.52*\Cx, 0.48*\Ay + 0.52*\Cy);

\coordinate (BC) at (0.5*\Cx + 0.5*\Bx, 0.5*\Cy + 0.5*\By);
\coordinate (BCC) at (0.52*\Cx + 0.48*\Bx, 0.52*\Cy + 0.48*\By);
\coordinate (BCB) at (0.48*\Cx + 0.52*\Bx, 0.48*\Cy + 0.52*\By);

\coordinate (Z) at (0.333*\Ax + 0.333*\Bx + 0.333*\Cx, 0.333*\Ay + 0.333*\By + 0.333*\Cy);
\coordinate (ZA) at (0.36*\Ax + 0.32*\Bx + 0.32*\Cx, 0.36*\Ay + 0.32*\By + 0.32*\Cy);
\coordinate (ZB) at (0.32*\Ax + 0.36*\Bx + 0.32*\Cx, 0.32*\Ay + 0.36*\By + 0.32*\Cy);
\coordinate (ZC) at (0.32*\Ax + 0.32*\Bx + 0.36*\Cx, 0.32*\Ay + 0.32*\By + 0.36*\Cy);

\filldraw [yellow] (A) -- (B) -- (C) -- cycle;

\filldraw [cyan, opacity=1] (A) -- (ABA) -- (ZA) -- (ACA) -- cycle;
\filldraw [green, opacity=1] (B) -- (ABB) -- (ZB) -- (BCB) -- cycle;
\filldraw [red, opacity=1] (C) -- (ACC) -- (ZC) -- (BCC) -- cycle;

\draw (Z) -- (AB);
\draw (Z) -- (AC);
\draw (Z) -- (BC);

\draw (ZA) -- (ABA);
\draw (ZA) -- (ACA);
\draw (ZB) -- (ABB);
\draw (ZB) -- (BCB);
\draw (ZC) -- (ACC);
\draw (ZC) -- (BCC);

\color{black}
\draw [thick] (A) -- (B) -- (C) -- cycle;

\color{black}
\fill (A) circle (2pt);
\fill (B) circle (2pt);
\fill (C) circle (2pt);

\draw (A) [left] node {$\be_1$};
\draw (B) [right]  node {$\be_2$};
\draw (C) [right] node {$\be_3$};


\draw (1.5,1) node {$A'_1$};
\draw (3.5,1) node {$A'_2$};
\draw (2.5,2.5) node {$A'_3$};

\draw (4.8,2.6) node {$S_\epsilon$};


\def\Ax_{6.5}
\def\Ay_{0.2}
\def\Bx_{11.0}
\def\By_{0.2}
\def\Cx_{8.75}
\def\Cy_{3.8}

\coordinate (A') at (\Ax_, \Ay_);
\coordinate (B') at (\Bx_, \By_);
\coordinate (C') at (\Cx_, \Cy_);

\coordinate (AB') at (0.5*\Ax_ + 0.5*\Bx_, 0.5*\Ay_ + 0.5*\By_);
\coordinate (AC') at (0.5*\Ax_ + 0.5*\Cx_, 0.5*\Ay_ + 0.5*\Cy_);
\coordinate (BC') at (0.5*\Cx_ + 0.5*\Bx_, 0.5*\Cy_ + 0.5*\By_);

\coordinate (Z') at (0.333*\Ax_ + 0.333*\Bx_ + 0.333*\Cx_, 0.333*\Ay_ + 0.333*\By_ + 0.333*\Cy_);

\filldraw [cyan, opacity=1] (A') -- (AB') -- (Z') -- (AC') -- cycle;
\filldraw [green, opacity=1] (B') -- (AB') -- (Z') -- (BC') -- cycle;
\filldraw [red, opacity=1] (C') -- (AC') -- (Z') -- (BC') -- cycle;

\draw (Z') -- (AB');
\draw (Z') -- (AC');
\draw (Z') -- (BC');

\color{black}
\draw [thick] (A') -- (B') -- (C') -- cycle;

\color{black}
\fill (A') circle (2pt);
\fill (B') circle (2pt);
\fill (C') circle (2pt);

\draw (A') [left] node {$\be_1$};
\draw (B') [right]  node {$\be_2$};
\draw (C') [right] node {$\be_3$};

\draw (8,1) node {$A''_1$};
\draw (9.5,1) node {$A''_2$};
\draw (8.75,2.5) node {$A''_3$};

\end{tikzpicture}

\end{figure}
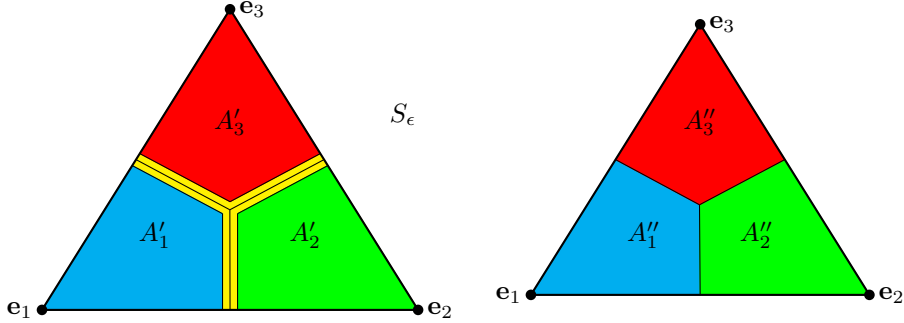

Next, we set $\epsilon' = \epsilon \sqrt{2}$ and define
$$ A''_i = A'_i - \epsilon' \be_i = \{ \bx - \epsilon' \be_i: \bx \in A'_i \}.$$
Thus $A''_i$ is a shifted copy of $A'_i$, where we push $A'_i$ slightly away from vertex $\be_i$. The sets $A''_i$ live in the hyperplane $\sum_{i=1}^{k} x_i = 1-\epsilon'$ rather than $\sum_{i=1}^{k} x_i = 1$. We claim that the sets $A''_i$ are still disjoint: Suppose that $A''_i \cap A''_j = (A'_i - \epsilon' \be_i) \cap (A'_j - \epsilon' \be_j) \neq \emptyset$. This would mean that there are points $\bx \in A'_i, \by \in A'_j$ such that $\bx - \epsilon' \be_i = \by - \epsilon' \be_j$. In other words, $\|\bx-\by\| = \epsilon' \|\be_i-\be_j\| =  \epsilon' \sqrt{2} = 2 \epsilon$. Take the midpoint $\frac12(\bx + \by)$: this point is in the simplex $\Delta_k$ (by convexity), and hence it is in some set $A_\ell$, where either $\ell \neq i$ or $\ell \neq j$ (possibly both). Assume without loss of generality that $\ell \neq i$. Then by the closedness of $A_i$ and $A_\ell$, between $\bx$ and $\frac12 (\bx+\by)$ there exists a point $\bx' \in A_i \cap A_\ell$. We get a contradiction, because $\| \bx' - \bx \| \leq \epsilon$ and so $\bx$ would not be included in $A'_i$.

We also observe that $A''_i \subseteq (1-\epsilon') \cdot \Delta_k = \{ \bx \geq 0: \sum_{i=1}^{k} x_i = 1-\epsilon'\}$. 
This is because for every $\bx \in A''_i$, we have $\by \in A'_i$ such that $\bx = \by - \epsilon' \be_i$. By assumption, $y_i \geq \epsilon_0 > \epsilon'$, and $\by \in \Delta_k$. Therefore $x_i = y_i - \epsilon' > 0$ and $\sum_{i=1}^{k} x_i = 1 - \epsilon'$. 
We conclude that $A''_1,\ldots,A''_k$ are disjoint subsets of $(1-\epsilon') \cdot \Delta_k$,
obtained by an isometry from $A'_1,\ldots,A'_k$ and therefore
$$ \sum_{i=1}^{k} \mu_{k-1}(A'_i) = \sum_{i=1}^{k} \mu_{k-1}(A''_i) \leq (1-\epsilon')^{k-1} \mu_{k-1}(\Delta_k).$$
Recall that $A'_1,\ldots,A'_k$ are also disjoint and $\bigcup_{i=1}^{k} A'_i = \Delta_k \setminus S_\epsilon$.
Therefore,
$$ \mu_{k-1}(S_\epsilon) = \mu_{k-1}(\Delta_k) - \sum_{i=1}^{k} \mu_{k-1}(A'_i)
 \geq \left(1 - (1-\epsilon')^{k-1} \right) \mu_{k-1}(\Delta_k). $$
By the definition of Minkowski content, we have
$$ \mu_{k-2}^-\left(S \right) = \liminf_{\epsilon \rightarrow 0^+} \frac{\mu_{k-1}(S_\epsilon)}{2\epsilon} \geq \liminf_{\epsilon \rightarrow 0^+} \frac{1 - (1-\epsilon')^{k-1}}{2\epsilon} \mu_{k-1}(\Delta_k) $$
$$ = \lim_{\epsilon \rightarrow 0^+} \frac{1 - (1-\epsilon \sqrt{2})^{k-1}}{2\epsilon} \mu_{k-1}(\Delta_k) = \frac{k-1}{\sqrt{2}} \mu_{k-1}(\Delta_k).$$
\end{proof}

\paragraph{Alternative proof of optimality.}
Here we give an alternative proof that the Voronoi partition has a \sep~of minimum Minkowski content,
avoiding an explicit computation of its volume.

\begin{proof}[Proof of Lemma~\ref{lem:Voronoi}]
Let us consider the Voronoi partition $(A^*_1,\ldots,A^*_k)$ (the proof for a general $\bz$ is similar).
We argue that the proof of Lemma~\ref{lem:sep-bound} is tight for this partition. As in the proof of Lemma~\ref{lem:sep-bound}, we define
$S^* = \bigcup_{i \neq j} (A^*_i \cap A^*_j)$, $S^*_\epsilon = \{ \bx \in \Delta_k: \exists \by \in S, \| \bx-\by \| \leq \epsilon \}$,
$A'_i = A^*_i \setminus S^*_\epsilon$ and $A''_i = A'_i - \epsilon' \be_i$, $\epsilon' = \epsilon \sqrt{2}$.
In the case of the Voronoi partition, these sets are explicitly described as follows:
\begin{itemize}
\item $A^*_i = \{ \bx \in \Delta_k: x_i = \max_{\ell \in [k]} x_\ell \}$,
\item $S^* = \{ \bx \in \Delta_k: \exists i \neq j, x_i = x_j = \max_{\ell \in [k]} x_\ell \}$,
\item $A'_i = \{ \bx \in \Delta_k: x_i > \epsilon' + \max_{\ell \neq i} x_\ell \}$,
\item $A''_i = \{ \bx - \epsilon' \be_i: \bx \in \Delta_k, x_i > \epsilon' + \max_{\ell \neq i} x_\ell \}$.
\end{itemize}
The description of $A'_i$ is valid because for $\bx \in A^*_i$, it is possible to find a point in $S^*$ within distance $\epsilon$ of $\bx$ if and only if the maximum coordinate $x_i$ is within $\epsilon' = \epsilon \sqrt{2}$ of the second largest coordinate --- then we can replace the two largest coordinates by their average and obtain a point in $S^*$.
The description of $A''_i$ follows by definition.

Consider now the scaled-down simplex $(1-\epsilon') \cdot \Delta_k$. By the proof of Lemma~\ref{lem:sep-bound}, the sets $A''_i$ are disjoint subsets of $(1-\epsilon') \cdot \Delta_k$. We show that in this case, we actually have $\sum_{i=1}^{k} \mu_{k-1}(A''_i) = \mu_{k-1}((1-\epsilon') \Delta_k)$. This is because for any point $\bx' \in (1-\epsilon') \cdot \Delta_k$, if the maximum coordinate $x'_i$ of $\bx'$ is unique then $\bx = \bx' + \epsilon' \be_i$ is a point in $\Delta_k$ such that $x_i > \epsilon' + \max_{\ell \neq i} x_\ell$. Therefore, $\bx \in A'_i$ which implies that $\bx' \in A''_i$. The points $\bx' \in (1-\epsilon') \cdot \Delta_k$ whose maximum coordinate is not unique form a set of $(k-1)$-dimensional measure zero. Therefore, $(1-\epsilon') \Delta_k$ is covered by $\bigcup_{i=1}^{k} A''_i$ up to a set of measure zero, and $\sum_{i=1}^{k} \mu_{k-1}(A'_i) = \sum_{i=1}^{k} \mu_{k-1}(A''_i) = \mu_{k-1}((1-\epsilon') \Delta_k) = (1-\epsilon \sqrt{2})^{k-1} \mu_{k-1}(\Delta_k)$. We also have $S^*_\epsilon = \Delta_k \setminus \bigcup_{i=1}^{k} A'_i$. This shows that all the inequalities in the proof of Lemma~\ref{lem:sep-bound} are tight and the Minkowski content of the \sep~$S^*$ is exactly
$$ \mu_{k-2}(S^*) = \lim_{\epsilon \rightarrow 0^+} \frac{\mu_{k-1}(S^*_\epsilon)}{2\epsilon} 
 = \lim_{\epsilon \rightarrow 0^+} \frac{1 - (1-\epsilon \sqrt{2})^{k-1}}{2\epsilon} \mu_{k-1}(\Delta_k)
 = \frac{k-1}{\sqrt{2}} \mu_{k-1}(\Delta_k).$$
\end{proof}

\section{Discussion and open questions}
\label{sec:discussion}

Sperner's Lemma extends to general polytopes in the following sense \cite{LoeraPS02}: 

\medskip \noindent
{\em For any coloring of a triangulation of a $d$-dimensional polytope with $n$ vertices by $n$ colors, such that each point on a face $F = \mbox{conv}(\{\bv_i: i \in A\})$ must be colored with a color in $A$, there are at least $n-d$ full-dimensional simplices with $d+1$ distinct colors.}

\medskip

It is natural ask whether our results also extend to general polytopes.

\paragraph{Possible extensions to polytopes.}

Consider the example of $P$ being a square (Figure~\ref{fig:square}). The Voronoi partition $(A_1,A_2,A_3,A_4)$ 
is not optimal with respect to the total length of the \sep.
The \sep~of the Voronoi partition has total length $2$, whereas total length arbitrarily close to $\sqrt{2}$ is achieved by the partition $(B_1,B_2,B_3,B_4)$.

\begin{figure}[h]

\caption{Two partitions of a square.}

\label{fig:square}

\begin{tikzpicture}

\node at (-1,0) {};

\def\Ax{0}
\def\Ay{0}
\def\Bx{4}
\def\By{0}
\def\Cx{4}
\def\Cy{4}
\def\Dx{0}
\def\Dy{4}

\coordinate (A) at (\Ax, \Ay);
\coordinate (B) at (\Bx, \By);
\coordinate (C) at (\Cx, \Cy);
\coordinate (D) at (\Dx, \Dy);

\coordinate (AB) at (0.5*\Ax + 0.5*\Bx, 0.5*\Ay + 0.5*\By);
\coordinate (BC) at (0.5*\Bx + 0.5*\Cx, 0.5*\By + 0.5*\Cy);
\coordinate (Z) at (0.5*\Ax + 0.5*\Cx, 0.5*\Ay + 0.5*\Cy);
\coordinate (CD) at (0.5*\Cx + 0.5*\Dx, 0.5*\Cy + 0.5*\Dy);
\coordinate (AD) at (0.5*\Ax + 0.5*\Dx, 0.5*\Ay + 0.5*\Dy);

\filldraw [cyan, opacity=1] (A) -- (AB) -- (Z) -- (AD) -- cycle;
\filldraw [green, opacity=1] (B) -- (AB) -- (Z) -- (BC) -- cycle;
\filldraw [red, opacity=1] (C) -- (BC) -- (Z) -- (CD) -- cycle;
\filldraw [orange, opacity=1] (D) -- (CD) -- (Z) -- (AD) -- cycle;

\draw (A) -- (B) -- (C) -- (D) -- cycle;

\draw (Z) -- (AB);
\draw (Z) -- (BC);
\draw (Z) -- (CD);
\draw (Z) -- (AD);

\node at (1,1) {$A_1$};
\node at (3,1) {$A_2$};
\node at (3,3) {$A_3$};
\node at (1,3) {$A_4$};

\color{black}
\fill (A) circle (2pt);
\fill (B) circle (2pt);
\fill (C) circle (2pt);
\fill (D) circle (2pt);


\def\Ax_{6}
\def\Ay_{0}
\def\Bx_{10}
\def\By_{0}
\def\Cx_{10}
\def\Cy_{4}
\def\Dx_{6}
\def\Dy_{4}

\coordinate (A_) at (\Ax_, \Ay_);
\coordinate (B_) at (\Bx_, \By_);
\coordinate (C_) at (\Cx_, \Cy_);
\coordinate (D_) at (\Dx_, \Dy_);

\coordinate (AB_) at (0.9*\Ax_ + 0.1*\Bx_, 0.9*\Ay_ + 0.1*\By_);
\coordinate (BC_) at (0.1*\Bx_ + 0.9*\Cx_, 0.1*\By_ + 0.9*\Cy_);
\coordinate (CD_) at (0.9*\Cx_ + 0.1*\Dx_, 0.9*\Cy_ + 0.1*\Dy_);
\coordinate (AD_) at (0.9*\Ax_ + 0.1*\Dx_, 0.9*\Ay_ + 0.1*\Dy_);

\coordinate (AZ_) at (0.95*\Ax_ + 0.05*\Cx_, 0.95*\Ay_ + 0.05*\Cy_);
\coordinate (CZ_) at (0.05*\Ax_ + 0.95*\Cx_, 0.05*\Ay_ + 0.95*\Cy_);

\filldraw [cyan, opacity=1] (A_) -- (AB_) -- (AZ_) -- (AD_) -- cycle;
\filldraw [green, opacity=1] (B_) -- (AB_) -- (AZ_) -- (CZ_) -- (BC_) -- cycle;
\filldraw [red, opacity=1] (C_) -- (BC_) -- (CZ_) -- (CD_) -- cycle;
\filldraw [orange, opacity=1] (D_) -- (CD_) -- (CZ_) -- (AZ_) -- (AD_) -- cycle;

\draw (A_) -- (B_) -- (C_) -- (D_) -- cycle;

\draw (AB_) -- (AD_);
\draw (AZ_) -- (CZ_);
\draw (BC_) -- (CD_);

\node at (5.7,0) {$B_1$};
\node at (9,1) {$B_2$};
\node at (10.3,4) {$B_3$};
\node at (7,3) {$B_4$};

\color{black}
\fill (A_) circle (2pt);
\fill (B_) circle (2pt);
\fill (C_) circle (2pt);
\fill (D_) circle (2pt);
\end{tikzpicture}

\end{figure}
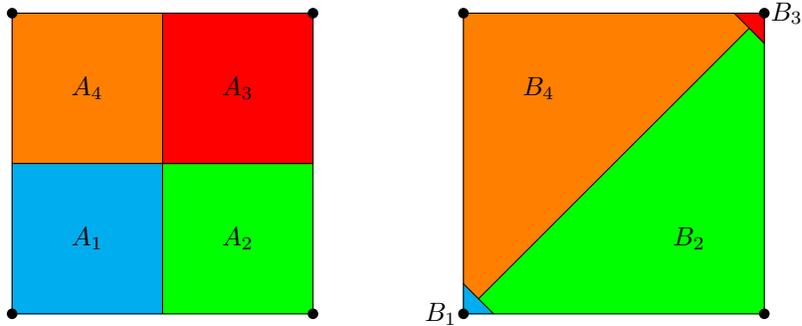

In general, we do not know what the partition minimizing $\mu(\bigcup_{i \neq j}(A_i \cap A_j))$ looks like,
even in the case of a non-regular simplex. 
We believe that the \sep~should still be polyhedral (piecewise linear) for an optimal Sperner-admissible partition of any polytope.

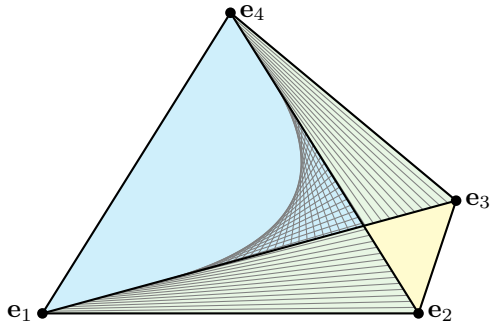
\begin{figure}[h]

\caption{An optimal partition between two pairs of faces of the tetrahedron.}

\label{fig:saddle}

\begin{tikzpicture}

\draw (-3,0) node {   };


\def\N{50}
\def\Ax{0}
\def\Ay{0}
\def\Bx{5}
\def\By{0}
\def\Cx{5.5}
\def\Cy{1.5}
\def\Dx{2.5}
\def\Dy{4}

\coordinate (A) at (\Ax, \Ay);
\coordinate (B) at (\Bx, \By);
\coordinate (C) at (\Cx, \Cy);
\coordinate (D) at (\Dx, \Dy);

\filldraw [cyan, opacity=0.1] (A) -- (C) -- (D) -- cycle;
\filldraw [cyan, opacity=0.1] (A) -- (B) -- (D) -- cycle;
\filldraw [yellow, opacity=0.1] (A) -- (B) -- (C) -- cycle;
\filldraw [yellow, opacity=0.1] (B) -- (C) -- (D) -- cycle;

\color{gray}
\foreach \i in {0,...,\N}
	{
	\coordinate (P) at ( \Ax + \i/\N*\Cx - \i/\N*\Ax, \Ay + \i/\N*\Cy - \i/\N*\Ay );
	\coordinate (Q) at ( \Bx + \i/\N*\Dx - \i/\N*\Bx, \By + \i/\N*\Dy - \i/\N*\By );
	\draw (P) -- (Q);
	}

\color{black}
\draw [thick] (A) -- (B) -- (C) -- (D) -- (A);
\draw [thick] (A) -- (C);
\draw [thick] (B) -- (D);

\color{black}
\fill (A) circle (2pt);
\fill (B) circle (2pt);
\fill (C) circle (2pt);
\fill (D) circle (2pt);

\draw (A) [left] node {$\be_1$};
\draw (B) [right]  node {$\be_2$};
\draw (C) [right] node {$\be_3$};
\draw (D) [right] node {$\be_4$};
\end{tikzpicture}

\end{figure}

We remark that depending on the coloring conditions on the surface of the polyhedron, the optimal \sep~may be non-linear: For a tetrahedron, the optimal partition that separates the pair of faces $\mbox{conv}(\be_1,\be_2,\be_3) \cup \mbox{conv}(\be_2,\be_3,\be_4)$ from $\mbox{conv}(\be_1,\be_2,\be_4) \cup \mbox{conv}(\be_1,\be_3,\be_4)$, is the minimal surface whose boundary is the non-planar 4-gon $\be_1$-$\be_2$-$\be_4$-$\be_3$. This is a saddle-shaped quadratic surface (see Figure~\ref{fig:saddle}).

\paragraph{Other open questions.}

We have proved several results about colorings of the simplex. Our first result (Proposition~\ref{prop:EV}) can be viewed as being at the opposite end of the spectrum from Sperner's Lemma: Instead of the existence of a rainbow cell, we proved a lower bound on the number of non-monochromatic cells. Due to the motivating application of \cite{EV14}, we considered a special hypergraph embedded in the simplex rather than a full subdivision. A natural question is whether an analogous statement holds for simplicial subdivisions. 

More generally, we might ``interpolate" between Sperner's Lemma and our result, and ask: How many cells must contain at least $j$ colors? It is clear that these questions depend on the structure of the subdivision, and some assumption of regularity would be needed to obtain a general result.
Similarly, we may ask, for Sperner-admissible geometric partitions of the simplex, what is the minimum possible volume of the set where at least $j$ colors meet? Furthermore, as we discussed above, are there generalizations of these statements to other polytopes?


Another question is, what is the Sperner-admissible labeling of the Simplex-Lattice Hypergraph $H_{k,q}$ (defined in Section~\ref{sec:prelims}) minimizing the maximum number of colors on a hyperedge? We have proved that $4$ colors suffice but it is possible that $2$ colors are enough (see Proposition~\ref{prop:4-colors}).
Is there a Sperner-admissible labeling of the hypergraph $H_{k,q}$, for sufficiently large $q$, such that each hyperedge uses at most $2$ colors?

Finally, we remark that Proposition~\ref{prop:4-colors} does not have a continuous counterpart for geometric partitions: As we discussed earlier, for any Sperner-admissible partition of a simplex there is a point where all the parts meet, by the Knaster-Kuratowski-Mazurkiewicz Lemma \cite{KKM29}.

\mypar{Acknowledgement.} {\em The second author is indebted in many ways to Jirka Matou\v{s}ek, who introduced him to Sperner's Lemma in an undergradute course at Charles University a long time ago.}


\bibliographystyle{plain}


\end{document}